\newtheorem{theorem}{Theorem}[section]
\newtheorem{lemma}[theorem]{Lemma}
\newtheorem{assumption}[theorem]{Assumption}
\newtheorem{corollary}[theorem]{Corollary}
\theoremstyle{definition}
\newtheorem{definition}[theorem]{Definition}
\theoremstyle{remark}
\newtheorem{remark}[theorem]{Remark}
\numberwithin{equation}{section}
\begin{document}

\setcounter{page}{1}

\title[local Hardy spaces]{Local Hardy spaces associated with ball quasi-Banach function spaces and their dual spaces}

\author[X. Chen]{Xinyu Chen}
\address{Xinyu Chen, School of Science, Nanjing University of Posts and Telecommunications, Nanjing 210023, China}
\email{\textcolor[rgb]{0.00,0.00,0.84}{chenxinyu1130@163.com}}

\author[J. Tan]{Jian Tan*}
\address{Jian Tan(Corresponding author), School of Science, Nanjing University of Posts and Telecommunications, Nanjing 210023, China}
\email{\textcolor[rgb]{0.00,0.00,0.84}{tj@njupt.edu.cn; tanjian89@126.com}}


\subjclass[2020]{Primary 42B30; Secondary 42B20.}

\keywords{Ball quasi-Banach function spaces, atomic decomposition, Hardy-type spaces, ball Campanato-type function spaces.}


\begin{abstract}
Let $X$ be a ball quasi-Banach function space on $\mathbb R^{n}$ and $h_{X}(\mathbb R^{n})$ the local Hardy space associated with $X$. In this paper, under some reasonable assumptions on $X$, the infinite and finite atomic decompositions for the local Hardy space $h_{X}(\mathbb R^{n})$ are established directly, without relying on the relation between $H_{X}(\mathbb R^{n})$ and $h_{X}(\mathbb R^{n})$. Moreover, we apply the finite atomic decomposition to obtain the dual space of the local Hardy space $h_{X}(\mathbb R^{n})$. Especially, the above results can be applied to several specific ball quasi-Banach function spaces, demonstrating their wide range of applications.
\end{abstract} \maketitle

\section{Introduction}
The Hardy space $H^{p}(\mathbb R^{n})$ with $p\in(0,1)$ is frequently employed as a suitable substitute of the Lebesgue space $L^{p}(\mathbb R^{n})$, particularly in the context of studying the boundedness of operators. Consequently, Hardy spaces play a pivotal role in various fields of analysis and partial differential equations. Recall that the classical Hardy space was originally introduced by Stein and Weiss \cite{1960On} and subsequently refined and expanded upon in a systematic manner by Fefferman and Stein \cite{fefferman1972vari}. In recent times, various variants of classical Hardy spaces have been introduced and their real-variable theories have been thoroughly studied. These variants encompass weighted Hardy spaces, (weighted) Hardy-Morrey spaces, Hardy-Orlicz spaces, Lorentz Hardy spaces. It is noteworthy that the elementary spaces on which the aforementioned Hardy-type spaces were built, including weighted Lebesgue spaces, (weighted) Morrey spaces, Orlicz spaces and Lorentz spaces, are not necessarily quasi-Banach function spaces, which implies the restriction of the notion of (quasi-)Banach function spaces and provides a motivation to establish a unified theory for the aforementioned Hardy-type spaces. Therefore, Sawano et al. \cite{sawano2017hardy} introduced the ball quasi-Banach function space $X$, the related Hardy space $H_{X}(\mathbb R^{n})$ and the related local Hardy space $h_{X}(\mathbb R^{n})$ so that the above spaces fall into this generalized framework. Furthermore, Sawano et al. \cite{sawano2017hardy} established various maximal function characterizations of $H_{X}(\mathbb R^{n})$ and $h_{X}(\mathbb R^{n})$ by assuming the boundedness of the Hardy--Littlewood maximal operator on the $p$-convexification of $X$. On the other hand, the atomic decomposition significantly contributes to the investigation of the boundedness of operators on Hardy-type spaces. In fact, Coifman \cite{coifman1974real} pioneered the atomic decomposition characterization of Hardy spaces on $\mathbb {R}$, with Latter \cite{1978A} extending this characterization to higher dimensions. Sawano et al. \cite{sawano2017hardy} also demonstrated that atomic characterizations of Hardy-type spaces associated with a ball quasi-Banach function $X$ strongly depend on the Fefferman--Stein vector-valued maximal inequality and the boundedness of the powered Hardy--Littlewood maximal operator on the associate space of $X$. We refer the reader to \cite{tanzhang,wang2020applications,wang2021weak,zhang2021weak} for more details on Hardy-type spaces associated with ball quasi-Banach function spaces.\par
The dual theory of classical Hardy spaces on $\mathbb R^{n}$ plays a significant role in many branches of analysis and has undergone systematic development to date.
Recall that John and Nirenberg \cite{john1961functions} introduced the bounded mean oscillation function space BMO$(\mathbb R^{n})$, which was proved to be the dual space of the Hardy space $H^{1}(\mathbb R^{n})$ by Fefferman and Stein \cite{fefferman1972vari}. To extend this result to the classical Hardy space $H^{p}(\mathbb R^{n})$ for any given $p\in(0,1]$, Campanato \cite{campanato1964proprieta} introduced the Campanato space $\mathcal{C}_{\alpha,q,s}(\mathbb R^{n})$, which coincides with BMO$(\mathbb R^{n})$ when $\alpha=0$. Thus, Campanato proved that for any $q\in[1,\infty]$ and any integer $s\in[0,\infty)\cap[n(1/p-1),\infty)$, the dual space of $H^{p}(\mathbb R^{n})$ is $\mathcal{C}_{1/p-1,q,s}(\mathbb R^{n})$. It is worth pointing out that Yan et al.\cite{yan2020intrinsic} introduced the Campanato-type space function space $\mathcal{L}_{X,q,s}(\mathbb R^{n})$ associated with $X$, which was proved to be the dual space of $H_{X}(\mathbb R^{n})$ under the condition that $X$ is concave. Very recently, Zhang et al. \cite{zhang2022new} introduced a new ball Campanato-type function space $\mathcal{L}_{X,q,s,d}(\mathbb R^{n})$ associated with the ball quasi-Banach function space $X$ and improved the aforementioned dual result by eliminating the requirement for $X$ to be concave.  

The main target of this paper is to obtain the infinite and finite atomic decompositions for the local Hardy space $h_{X}(\mathbb R^{n})$ associated with the ball quasi-Banach function $X$. In fact, the authors give the infinite and finite decompositions for the global Hardy space $H_{X}(\mathbb R^{n})$ in \cite[Subsection 3.6]{sawano2017hardy} and \cite[Theorem 1.10]{yan2020intrinsic}, respectively. Moreover, Wang et al. \cite[Theorem 4.8]{wang2020applications} showed the infinite decomposition for the local Hardy space $h_{X}(\mathbb R^{n})$ via applying the relation between $H_{X}(\mathbb R^{n})$ and $h_{X}(\mathbb R^{n})$. Inspired by \cite{tan2023real}, we establish the infinite and finite atomic decompositions for the local Hardy space $h_{X}(\mathbb R^{n})$ by avoiding using the relation between $H_{X}(\mathbb R^{n})$ and $h_{X}(\mathbb R^{n})$. As an application of the above finite atomic decomposition, we shall obtain the dual space of the local Hardy space $h_{X}(\mathbb R^{n})$. Finally, we apply the main theorems to several concrete ball quasi-Banach function spaces, which implies that the main results have a wide range of generality.\par\par
The paper is organized as follows. In Section~\ref{se2}, we present some necessary definitions and important lemmas. In Section~\ref{se3}, we present the maximal function characterizations for the local Hardy space $h_{X}(\mathbb R^{n})$, which contribute to the proof of atomic decompositions in Section~\ref{se4}. In Section~\ref{se4}, we obtain the infinite and finite atomic decompositions of $h_{X}(\mathbb R^{n})$ under some reasonable assumptions on a ball quasi-Banach function space $X$. In Section~\ref{se5}, we employ the finite atomic decomposition theorem to derive the dual space of $h_{X}(\mathbb R^{{n}})$. Finally, in Section~\ref{se6}, we apply the main results obtained in aforementioned sections to some concrete ball quasi-Banach function spaces.\par
Throughout this paper, $C$ or $c$ denotes a positive constant that is independent of the main parameters involved but may vary at each occurrence. To denote the dependence of the constants on some parameter $s$, we will write $C_{s}$. We denote $f\leq Cg$ by $f\lesssim g$. If $f\lesssim g\lesssim f$, we write $f\sim g$ or $f\approx g$. Denote $Q(x,l(Q))$ the closed cube centered at $x$ and of side-length $l(Q)$. Similarly, given $Q=Q(x,l(Q))$ and $\lambda>0$, $\lambda Q$ means the cube with the same center $x$ and with side-length $\lambda l(Q)$. We denote $Q^{*}=2\sqrt{n}Q$. Let $\mathbb N:=\{1,2,\cdots\}$, $\mathbb Z_{+}:=\mathbb N\cup\{0\}$ and $\mathbb Z^{n}_{+}:=(\mathbb Z_{+})^{n}$. We use $\mathscr{M}(\mathbb R^{n})$ to denote the set of all measurable functions on $\mathbb R^{n}$.

\section{Preliminaries}\label{se2}
In this section, we present some concepts and lemmas that will be used throughout the paper.\par
First, we recall the definitions of ball quasi-Banach function spaces and their related local Hardy spaces $h_{X}(\mathbb R^{n})$. For any $x\in\mathbb R^{n}$ and $r\in(0,\infty)$, let $B(x,r):=\{y\in\mathbb R^{n}\colon\vert x-y\vert<r\}$ and
\begin{equation}\label{eq2.1}
\mathbb B(\mathbb R^{n}):=\left\{B(x,r)\colon x\in\mathbb R^{n}\ {\rm and}\ r\in(0,\infty)\right\}.
\end{equation}\par
The concept of ball quasi-Banach function spaces on $\mathbb R^{n}$ is as follows. For more details, see \cite{sawano2017hardy,tan2024}.
\begin{definition}\label{def2.1}
    Let $X\subset\mathscr{M}(\mathbb R^{n})$ be a quasi-normed linear space equipped with a quasi-norm $\| \cdot \|_{X}$ which makes sense for all measurable functions on $\mathbb R^{n}$. Then $X$ is called a \emph{ball\ quasi\text{-}Banach\ function\ space} on $\mathbb R^{n}$ if it satisfies
    \begin{enumerate}[\quad(i)]
    \item if $f\in\mathscr{M}(\mathbb R^{n})$, then $\|f\|_{X}=0$ implies that $f=0$ almost everywhere;
    \item if $f, g\in\mathscr{M}(\mathbb R^{n})$, then $|g|\leq|f|$ almost everywhere implies that $\|g\|_{X}\leq\|f\|_{X}$;
    \item if $\{f_{m}\}_{m\in\mathbb N}\subset\mathscr{M}(\mathbb R^{n})$ and $f\in\mathscr{M}(\mathbb R^{n})$, then $0\leq f_{m}\uparrow f$ almost everywhere as $m\to\infty$ implies that $\|f_{m}\|_{X}\uparrow\|f\|_{X}$ as $m\to
    \infty$;
    \item $B\in\mathbb B(\mathbb R^{n})$ implies that $\chi_{B}\in X$, where $\mathbb B(\mathbb R^{n})$ is the same as (\ref{eq2.1}).
    \end{enumerate}
    Moreover, a ball quasi-Banach function space $X$ is called a \emph{ball\ Banach\ function\ space} if it satisfies
    \begin{enumerate}
    \item[(v)] for any $f,g\in X$
    \[
    \|f+g\|_{X}\leq\|f\|_{X}+\|g\|_{X};
    \]
    \item[(vi)] for any ball $B\in\mathbb B(\mathbb R^{n})$, there exists a positive constant $C_{(B)}$, depending on $B$, such that, for any $f\in X$,
    \[
    \int_{B}\vert f(x)\vert dx\leq C_{(B)}\| f\|_{X}.
    \]
    \end{enumerate}
\end{definition}
The associate space $X^{\prime}$ of any given ball Banach function space $X$ is defined as follows. Details are referred to \cite[Chapter 1, Section 2]{bennett1988interpolation} or \cite[p.9]{sawano2017hardy}.
\begin{definition}\label{def2.2}
    For any given ball Banach function space $X$, its \emph{associate space} (also called the \emph{K\"othe dual space}) $X^{\prime}$ is defined by setting
    \[
    X^{\prime}:=\{f\in\mathscr{M}(\mathbb R^{n})\colon\|f\|_{X^{\prime}}<\infty\},
    \]
    where, for any $f\in X^{\prime}$,
    \[
    \|f\|_{X^{\prime}}:=\sup\left\{ \|fg\|_{L^{1}}\colon g\in X,\ \|g\|_{X}=1 \right\},
    \]
    and $\|\cdot\|_{X^{\prime}}$ is called the \emph{associate norm} of $\|\cdot\|_{X}$.
\end{definition}
We also recall the concepts of both the convexity and the concavity of ball quasi-Banach function spaces \cite[Definition 2.6]{sawano2017hardy}.
\begin{definition}\label{def2.3}
    Let $X$ be a ball quasi-Banach function space and $p\in(0,\infty)$.
    \begin{enumerate}[(i)]
        \item The \emph{p\text{-}convexification} $X^{p}$ of $X$ is defined by setting
        \[
        X^{p}:=\{f\in\mathscr{M}(\mathbb R^{n})\colon\vert f\vert^{p}\in X\}
        \]
        equipped with the \emph{quasi-norm} $\|f\|_{X^{p}}:=\|\vert f\vert^{p}\|_{X}^{1/p}$ for any $f\in X^{p}$.
        \item The space $X$ is said to be \emph{p-convex} if there exists a positive constant $C$ such that, for any $\{f_{k}\}_{k\in\mathbb N}\subset X^{1/p}$,
        \[
        \left\| \sum_{k=1}^{\infty}\vert f_{k}\vert \right\|_{X^{1/p}}\leq C\sum_{k=1}^{\infty}\|f_{k}\|_{X^{1/p}}.
        \]
        In particular, when $C=1$, $X$ is said to be \emph{p-strictly convex}.
        \item The space $X$ is said to be \emph{p-concave} if there exists a positive constant $C$ such that, for any $\{f_{k}\}_{k\in\mathbb N}\subset X^{1/p}$,
        \[
        \sum_{k=1}^{\infty}\|f_{k}\|_{X^{1/p}}\leq C\left\| \sum_{k=1}^{\infty}\vert f_{k}\vert \right\|_{X^{1/p}}.
        \]
        In particular, when $C=1$, $X$ is said to be \emph{p-strictly concave}.
    \end{enumerate}
\end{definition}
Now, we present the definition of the local Hardy space $h_{X}(\mathbb R^{n})$ associated with $X$, which was first introduced in \cite[Definition 5.1]{sawano2017hardy}. For $N\in\mathbb Z_{+}$, $\beta\in\mathbb Z^n_{+}$, $\vert\beta\vert\leq N+1$ and $R\in(0,\infty)$, let
\[
\mathcal{S}_{N,R}(\mathbb R^{n})=\left\{  \psi\in\mathcal{S}(\mathbb R^{n})\colon{\rm supp}(\psi)\subset B(0,R),\ \int\psi\neq0,\ \|D^{\beta}\psi\|_{\infty}\leq 1 \right\}.
\]
\begin{definition}
    For any $f\in\mathcal{S}^{\prime}(\mathbb R^{n})$, the \emph{local vertical grand maximal function} $\mathcal{G}_{N,R}(f)$ of $f$ is defined by setting, for all $x\in\mathbb R^{n}$,
    \[
    \mathcal{G}_{N,R}(f)(x)=\sup_{t\in(0,1)}\left\{ \vert\psi_{t}\ast f(x)\vert\colon\psi\in\mathcal{S}_{N,R}(\mathbb R^{n}) \right\},
    \]
    and the \emph{local non-tangential grand maximal function} $\tilde{{\mathcal{G}}}_{N,R}(f)$ of $f$ is defined by setting, for all $x\in\mathbb R^{n}$,
    \[
    \tilde{{\mathcal{G}}}_{N,R}(f)(x)=\sup_{\vert x-z \vert<t<1}\left\{\vert \psi_{t}\ast f(z) \vert\colon\psi\in\mathcal{S}_{N,R}(\mathbb R^{n})\right\}.
    \]
    For convenience, we write $\mathcal{G}_{N,1}(f)=\mathcal{G}_{N}^{0}(f)$ and $\tilde{\mathcal{G}}_{N,1}(f)=\tilde{\mathcal{G}}_{N}^{0}(f)$ and also write $\mathcal{G}_{N,2^{3(10+n)}}(f)=\mathcal{G}_{N}(f)$ and $\tilde{\mathcal{G}}_{N,2^{3(10+n)}}(f)=\tilde{\mathcal{G}}_{N}(f)$.
\end{definition}
\begin{definition}\label{def2.4}
    Let $X$ be a ball quasi-Banach function space and $N\in\mathbb N$ sufficiently large. Then the \emph{local Hardy space} $h_{X}(\mathbb R^{n})$ is defined to be the set of all the $f\in\mathcal{S^{\prime}}(\mathbb R^{n})$ such that
    \[
    \|f\|_{h_{X}}:=\|\mathcal{G}_{N}(f)\|_{X}<\infty.
    \]
\end{definition}
Then we give the definition of the local-$(X,q,d)$-atom.
\begin{definition}\label{def2.6}
    Let $X$ be a ball quasi-Banach function space and $q\in(1,\infty]$. Assume that $d\in\mathbb Z_{+}$. Then a measurable function $a$ is called a \emph{local-$(X,q,d)$-atom} if 
    \begin{enumerate}[(i)]
        \item there exists a cube $Q\subset\mathbb R^{n}$ such that ${\rm supp}a\subset Q$;
        \item $\|a\|_{L^{q}}\leq\frac{\vert Q \vert^{1/q}}{\|\chi_{Q}\|_{X}}$;
        \item if $\vert Q \vert<1$, then $\int_{\mathbb R^{n}}a(x)x^{\alpha}dx=0$ for any multi-index $\alpha\in\mathbb Z^{n}_{+}$ with $\vert\alpha\vert\leq d$.
    \end{enumerate}
\end{definition}
Denote by $L^{1}_{loc}(\mathbb R^{n})$ the set of all locally integral functions on $\mathbb R^{n}$. Recall that the \emph{Hardy--Littlewood maximal operator} $\mathcal{M}$ is defined by setting, for any measurable function $f$ and any $x\in\mathbb R^{n}$,
\[
\mathcal{M}(f)(x)=\sup\limits_{x\in Q}\frac{1}{\vert Q\vert}\int_{Q}f(u)du.
\]
For any $\theta\in(0,\infty)$, the \emph{powered Hardy--Littlewood maximal operator} $\mathcal{M}^{(\theta)}$ is defined by setting, for any $f\in L^{1}_{loc}(\mathbb R^{n})$ and $x\in\mathbb R^{n}$,
\[
\mathcal{M}^{(\theta)}(f)(x):=\{\mathcal{M}(\vert f \vert^{\theta})(x)\}^{\frac{1}{\theta}}.
\]\par
Moreover, we also need a basic assumption on $X$ as follows. 
\begin{assumption}\label{ass2.7}
Let $X$ be a ball quasi-Banach function space. For some $\theta,\ s\in(0,1]$ and $\theta<s$, there exists a positive constant $C$ such that, for any $\{f_{j}\}_{j=1}^{\infty}\subset L^{1}_{loc}(\mathbb R^{n})$,
\[
\left\|  \left\{  \sum_{j=1}^{\infty}\left[ \mathcal{M}^{(\theta)}(f_{j}) \right]^{s} \right\}^{\frac{1}{s}}  \right\|_{X}\leq C\left\|  \left\{  \sum_{j=1}^{\infty}\vert f_{j} \vert^{s} \right\}^{\frac{1}{s}} \right\|_{X}.
\]
\end{assumption}
The following lemma will be used in the proof of Theorem~\ref{th4.1}.
\begin{lemma}\label{le2.8}
    Let $X$ be a ball quasi-Banach function space. Fix $q_{0}>1$. Suppose that $0<p_{0}<q_{0}$. Let $\tau=\frac{q_{0}}{p_{0}}$. If $X^{1/p_{0}}$ is a ball Banach function space and the Hardy--Littlewood maximal operator $\mathcal{M}$ is bounded on $[(X^{1/p_{0}})^{\prime}]^{1/\tau^{\prime}}$, then for all sequences of cubes $\{Q_{j}\}_{j=1}^{\infty}$ and non-negative functions $\{g_{j}\}_{j=1}^{\infty}$,
    \[
    \left\|  \sum_{j=1}^{\infty}\chi_{Q_{j}}g_{j} \right\|_{X}\leq C\left\| \sum_{j=1}^{\infty}\left( \frac{1}{\vert Q_{j}\vert}\int_{Q_{j}}g_{j}^{q_{0}}(y)dy \right)^{\frac{1}{q_{0}}}\chi_{Q_{j}} \right\|_{X}.
    \]
\end{lemma}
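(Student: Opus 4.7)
The plan is to use the duality afforded by the $1/p_0$-convexification $X^{1/p_0}$, together with a cube-by-cube H\"older estimate and the assumed boundedness of the Hardy--Littlewood maximal operator on $[(X^{1/p_0})']^{1/\tau'}$. Set $F := \sum_j \chi_{Q_j} g_j$ and $A_j := (|Q_j|^{-1}\int_{Q_j} g_j^{q_0})^{1/q_0}$, so that the target inequality is $\|F\|_X \lesssim \|\sum_j A_j \chi_{Q_j}\|_X$. Since $\|F\|_X^{p_0} = \|F^{p_0}\|_{X^{1/p_0}}$ and, by hypothesis, $X^{1/p_0}$ is a ball Banach function space, the K\"othe duality on $X^{1/p_0}$ gives
\[
\|F^{p_0}\|_{X^{1/p_0}} = \sup\Big\{ \textstyle\int F^{p_0}h : h \ge 0,\ \|h\|_{(X^{1/p_0})'}\le 1\Big\},
\]
reducing matters to bounding $\int F^{p_0}h$ uniformly in such $h$. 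In the relevant regime $p_0\in(0,1]$, subadditivity of $t\mapsto t^{p_0}$ yields $F^{p_0}\le \sum_j \chi_{Q_j} g_j^{p_0}$, so it is enough to control $\sum_j \int_{Q_j} g_j^{p_0}h$.

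Next I would apply H\"older's inequality on each $Q_j$ with the conjugate pair $\tau = q_0/p_0$ and $\tau'$:
\[
\int_{Q_j} g_j^{p_0}h \le |Q_j|^{p_0/q_0} A_j^{p_0}\Big(\textstyle\int_{Q_j} h^{\tau'}\Big)^{1/\tau'}
\le A_j^{p_0}\int_{Q_j} \big[\mathcal{M}(h^{\tau'})\big]^{1/\tau'},
\]
where the second inequality uses $(|Q_j|^{-1}\int_{Q_j} h^{\tau'})^{1/\tau'}\le [\mathcal{M}(h^{\tau'})(x)]^{1/\tau'}$ for $x\in Q_j$, followed by averaging in $x$ together with $p_0/q_0 + 1/\tau' = 1$. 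Summing in $j$ gives
\[
\int F^{p_0}h \le \int \Big(\sum_j A_j^{p_0}\chi_{Q_j}\Big) \big[\mathcal{M}(h^{\tau'})\big]^{1/\tau'}.
\]
H\"older's inequality for the dual pair $(X^{1/p_0},(X^{1/p_0})')$ together with the identity $\|[\mathcal{M}(h^{\tau'})]^{1/\tau'}\|_{(X^{1/p_0})'} = \|\mathcal{M}(h^{\tau'})\|_{[(X^{1/p_0})']^{1/\tau'}}^{1/\tau'}$ (from Definition~\ref{def2.3}) and the assumed boundedness of $\mathcal{M}$ on $[(X^{1/p_0})']^{1/\tau'}$ yield $\|[\mathcal{M}(h^{\tau'})]^{1/\tau'}\|_{(X^{1/p_0})'}\lesssim \|h\|_{(X^{1/p_0})'}\le 1$. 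Taking the supremum over $h$ gives
\[
\|F\|_X^{p_0}\lesssim \Big\|\sum_j A_j^{p_0}\chi_{Q_j}\Big\|_{X^{1/p_0}}.
\]

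The main obstacle I anticipate is the final bookkeeping that passes from $\|\sum_j A_j^{p_0}\chi_{Q_j}\|_{X^{1/p_0}}$ to $\|\sum_j A_j \chi_{Q_j}\|_X^{p_0} = \|(\sum_j A_j\chi_{Q_j})^{p_0}\|_{X^{1/p_0}}$. For a pairwise disjoint collection the two quantities coincide pointwise and the identification is immediate from the definition of $X^{1/p_0}$; in general they differ by the overlap count of the $Q_j$. I would handle this by a preliminary stopping-time/Whitney-type selection that replaces the original collection by an almost-disjoint subcollection whose associated sum is comparable to the original in $X$, and then running the above duality chain on that subcollection. Taking $p_0$-th roots at the end completes the proof.
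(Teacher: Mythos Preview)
Your duality-and-H\"older chain is sound up to the display
\[
\|F\|_X^{p_0}\lesssim \Big\|\sum_j A_j^{p_0}\chi_{Q_j}\Big\|_{X^{1/p_0}},
\]
but the final passage is a genuine gap, not bookkeeping. You need
$\big\|\sum_j A_j^{p_0}\chi_{Q_j}\big\|_{X^{1/p_0}}\lesssim\big\|(\sum_j A_j\chi_{Q_j})^{p_0}\big\|_{X^{1/p_0}}$,
and for $p_0\le 1$ subadditivity of $t\mapsto t^{p_0}$ gives only the \emph{reverse} pointwise inequality $(\sum_j A_j\chi_{Q_j})^{p_0}\le\sum_j A_j^{p_0}\chi_{Q_j}$. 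The two uses of subadditivity you make---once to split $F^{p_0}$ and once (in reverse) to reassemble $G^{p_0}$---are in direct tension and cannot both hold unless $p_0=1$. Your proposed Whitney/stopping-time fix is not viable either: the $\{Q_j\}$ and $\{g_j\}$ are \emph{given} arbitrary data in the statement, with no nesting, dyadic, or bounded-overlap structure assumed, so there is no sparsification that simultaneously keeps $\sum_j g_j\chi_{Q_j}$ and $\sum_j A_j\chi_{Q_j}$ comparable in $X$ to their full versions.

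The paper sidesteps this obstruction by going through Rubio de Francia extrapolation rather than arguing termwise. It first proves an abstract transfer principle (Lemma~\ref{le2.10}): if a pair inequality $\|f\|_{L^{p_0}_\omega}\lesssim\|g\|_{L^{p_0}_\omega}$ holds for \emph{all} weights $\omega\in RH_{\tau'}$, then $\|f\|_X\lesssim\|g\|_X$ under the stated hypotheses on $X$. The proof dualizes exactly as you do, but replaces the test function $h$ by the iterated majorant $H=[\mathcal{R}(|h|^{\tau'})]^{1/\tau'}$, which is a genuine $RH_{\tau'}$ weight. One then invokes the weighted inequality $\int F^{p_0}H\lesssim\int G^{p_0}H$ (supplied by \cite[Lemma~4.7]{cruz2020}) with $G=\sum_j A_j\chi_{Q_j}$ already assembled, so the split--recombine problem never arises. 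In effect, your single application of $\mathcal{M}$ to $h^{\tau'}$ is the first term of the Rubio de Francia iteration; iterating it to produce an $RH_{\tau'}$ weight and then quoting the weighted pair inequality is precisely what closes the argument.
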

\begin{remark}\label{re2.9}
    In fact, by the definition of $X^{1/p_{0}}$ and $(X^{1/p_{0}})^{\prime}$, the hypothesis that the Hardy--Littlewood maximal operator $\mathcal{M}$ is bounded on $[(X^{1/p_{0}})^{\prime}]^{1/\tau^{\prime}}$ is equivalent to the assumption that for any $f\in(X^{1/p_{0}})^{\prime}$,
    \[
    \left\| \mathcal{M}^{((q_{0}/p_{0})^{\prime})}(f) \right\|_{(X^{1/p_{0}})^{\prime}}\leq C\|f\|_{(X^{1/p_{0}})^{\prime}}.
    \]
\end{remark}
In order to prove Lemma~\ref{le2.8}, we first recall some information about weights. For more details, see \cite{cruz2011weights,2001fourier}. Suppose that a weight $\omega$ is a non-negative, locally integrable function such that $0<\omega(x)<\infty$ for almost every $x\in\mathbb R^{n}$. It is said that $\omega$ is in the \emph{Muckenhoupt class} $A_{p}$ for $1<p<\infty$ if
\[
[\omega]_{A_{p}}=\sup\limits_{Q}\left(\frac{1}{Q}\int_{Q}\omega(x)dx\right)\left(\frac{1}{Q}\int_{Q}\omega(x)^{-\frac{1}{p-1}}dx\right)^{p-1}<\infty,
\]
where $Q$ is any cube in $\mathbb {R}^{n}$ and when $p=1$, a weight $\omega\in A_{1}$ if for almost everywhere $x\in\mathbb {R}^{n}$,
\[
\mathcal{M}\omega(x)\leq C\omega(x),
\]
Therefore, define the set
\[
A_{\infty}=\bigcup\limits_{1\leq p<\infty}A_{p}.
\]
Given a weight $\omega\in A_{\infty}$, define $$q_{\omega}=\inf\{q\geq1\colon \omega\in A_{q}\}.$$
A weight $\omega\in A_{\infty}$ if and only if $\omega\in RH_{r}$ for some $r>1$: that is, for every cube $Q$,
\[
\left(\frac{1}{\vert Q\vert}\int_{Q}\omega(x)^{r}dx\right)^{\frac{1}{r}}\leq \frac{C}{\vert Q\vert}\int_{Q}\omega(x)dx.
\]
Furthermore, we can obtain the property that $\omega\in RH_{r}$ if and only if $\omega^{r}\in A_{\infty}$.\par
Then, we define a family of extrapolation pairs to be a family $\mathcal{F}$ of pairs of non-negative, measurable functions $(f,g)$. Whenever we write an inequality of the form
\[
\|f\|_{X}\lesssim\|g\|_{Y},\quad (f,g)\in\mathcal{F},
\]
where $\|\cdot\|_{X}$ and $\|\cdot\|_{Y}$ are quasi-norms in ball quasi-Banach function spaces, we mean that this equality holds for every pair $(f,g)$ in $\mathcal{F}$ such that $\|f\|_{X}<\infty$ and the constant is independent of the pair $(f,g)$. We now prove the extrapolation theorem into the scale of ball quasi-Banach function spaces. For more details about extrapolation and the theory of Rubio de Francia, see \cite{cruz2011weights}. In fact, from the following lemma and \cite[Lemma 4.7]{cruz2020}, we can immediately obtain Lemma~\ref{le2.8}.
\begin{lemma}\label{le2.10}
    Let $X$ be a ball quasi-Banach function space. Given $0<p<q$. Let $\tau=\frac{q}{p}$. Suppose that for all $\omega\in RH_{\tau^{\prime}}$,
    \[
    \|f\|_{L^{p}_{\omega}}\lesssim\|g\|_{L^{p}_{\omega}}, \quad (f,g)\in\mathcal{F}.
    \]
    If $X^{1/p}$ is a ball Banach function space and the Hardy--Littlewood maximal operator $\mathcal{M}$ is bounded on $[(X^{1/p})^{\prime}]^{1/\tau^{\prime}}$, then for any $f\in X$ and $(f,g)\in\mathcal{F}$,
    \[
    \|f\|_{X}\lesssim\|g\|_{X}.
    \]
\end{lemma}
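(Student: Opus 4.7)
The plan is to execute the Rubio de Francia extrapolation algorithm in the ball Banach function space setting. Since $\|f\|_X = \||f|^p\|_{X^{1/p}}^{1/p}$, it suffices to prove $\||f|^p\|_Y \lesssim \||g|^p\|_Y$ with $Y := X^{1/p}$, which is a ball Banach function space by hypothesis. The starting point is the duality identity
$$\|F\|_Y = \sup\left\{\int_{\mathbb R^n} F(x) h(x)\,dx \colon h \in Y',\ h \ge 0,\ \|h\|_{Y'} \le 1\right\},$$
valid for non-negative $F \in Y$; this follows from the Fatou-type axiom (iii) of Definition~\ref{def2.1} together with the definition of the associate norm (cf.\ \cite{bennett1988interpolation,sawano2017hardy}).

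Next, from an arbitrary non-negative $h \in Y'$ with $\|h\|_{Y'} \le 1$, I construct a weight $\omega \in RH_{\tau'}$ that dominates $h$ and is controlled in $Y'$. Let $B := \|\mathcal{M}^{(\tau')}\|_{Y' \to Y'}$, which is finite by Remark~\ref{re2.9}, and set
$$\mathcal{R}h := \sum_{k=0}^{\infty} \frac{(\mathcal{M}^{(\tau')})^{k} h}{(2B)^{k}}.$$
Routine calculations yield (i) $h \le \mathcal{R}h$ pointwise; (ii) $\|\mathcal{R}h\|_{Y'} \le 2\|h\|_{Y'}$; and (iii) $\mathcal{M}^{(\tau')}(\mathcal{R}h) \le 2B\,\mathcal{R}h$, equivalently $w := (\mathcal{R}h)^{\tau'} \in A_1$. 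By the standard implication $w \in A_1 \Rightarrow w^{1/\tau'} \in RH_{\tau'}$ (a direct consequence of the characterization $\frac{1}{|Q|}\int_Q w \le C\,\mathop{\mathrm{ess\,inf}}_Q w$), the weight $\omega := \mathcal{R}h$ lies in $RH_{\tau'}$ with reverse-Hölder constant depending only on $B$ and $\tau'$.

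Combining these ingredients with the extrapolation hypothesis: for any admissible $h$ as above, apply the weighted bound on the pair $(f,g) \in \mathcal{F}$ with weight $\omega := \mathcal{R}h$ to obtain
$$\int_{\mathbb R^n} |f|^p h \le \int_{\mathbb R^n} |f|^p\,\mathcal{R}h = \|f\|_{L^p_\omega}^p \lesssim \|g\|_{L^p_\omega}^p = \int_{\mathbb R^n} |g|^p\,\mathcal{R}h \le \||g|^p\|_Y \|\mathcal{R}h\|_{Y'} \le 2\,\||g|^p\|_Y.$$
Taking the supremum over such $h$ and invoking the duality identity yields $\||f|^p\|_Y \lesssim \||g|^p\|_Y$, equivalently $\|f\|_X \lesssim \|g\|_X$, as required.

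The main technical obstacle is ensuring $\|f\|_{L^p_\omega}$ is finite so that the weighted hypothesis can actually be invoked; this is handled by the standard truncation device of replacing $f$ by $f_N := f\,\chi_{\{|f| \le N\} \cap B(0,N)}$, running the argument for the admissible pair $(f_N, g)$, and passing to the limit $N \to \infty$ using the monotone convergence aspect of axiom~(iii) in Definition~\ref{def2.1}. A secondary subtlety is justifying the duality identity for non-negative elements of $Y$ in the ball Banach function space setting, where it follows from the Fatou property of $X^{1/p}$ together with a Lorentz--Luxemburg type argument on each ball; the restriction of integration to balls in axiom~(vi) is precisely what makes this non-trivial compared with the classical Banach function space case.
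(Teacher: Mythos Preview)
Your proof is correct and follows essentially the same Rubio de Francia extrapolation strategy as the paper. The only cosmetic difference is in how the iteration algorithm is packaged: the paper iterates the ordinary maximal operator $\mathcal{M}$ on the space $[(X^{1/p})']^{1/\tau'}$ and applies it to $|h|^{\tau'}$, setting the weight to be $H=[\mathcal{R}(|h|^{\tau'})]^{1/\tau'}$, whereas you iterate the powered operator $\mathcal{M}^{(\tau')}$ directly on $Y'=(X^{1/p})'$ and take $\omega=\mathcal{R}h$; by Remark~\ref{re2.9} these are equivalent formulations, and in fact $(\mathcal{M}^{(\tau')})^k h=[\mathcal{M}^k(|h|^{\tau'})]^{1/\tau'}$, so the resulting $RH_{\tau'}$ weights have the same structure. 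For the finiteness issue needed to invoke the weighted hypothesis, the paper simply uses $\int|f|^p H\le\||f|^p\|_{X^{1/p}}\|H\|_{(X^{1/p})'}<\infty$ (since $f\in X$ is assumed), which is more direct than your truncation device but achieves the same end.
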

\begin{proof}
    For any non-negative $h$, we define the Rubio de Francia iteration algorithm by
    \[
    \mathcal{R}h(x):=\sum_{k\in\mathbb Z_{+}}\frac{\mathcal{M}^{k}(h)(x)}{2^{k}\|\mathcal{M}\|^{k}_{[(X^{\frac{1}{p}})^{\prime}]^{\frac{1}{\tau^{\prime}}}}},
    \]
    where $\mathcal{M}^{0}(h):=\vert h\vert$ and, for any $k\in\mathbb N$, $\mathcal{M}^{k}:=\mathcal{M}\circ\dots\circ\mathcal{M}$ is $k$ iterations of $\mathcal{M}$. Then, $\mathcal{R}h$ has the following properties:
    \begin{enumerate}[(i)]
    \item $h(x)\leq\mathcal{R}h(x)$;
    \item $\|\mathcal{R}h\|_{[(X^{\frac{1}{p}})^{\prime}]^{\frac{1}{\tau^{\prime}}}}\leq 2\|h\|_{[(X^{\frac{1}{p}})^{\prime}]^{\frac{1}{\tau^{\prime}}}}$;
    \item $\mathcal{M}(\mathcal{R}h)\leq 2\|\mathcal{M}\|_{([(X^{\frac{1}{q}})^{\prime}]^{\frac{1}{\tau^{\prime}}}}\mathcal{R}h$, namely, $\mathcal{R}h\in A_{1}$ and
    \[
    [\mathcal{R}h]_{A_{1}}\leq 2\|\mathcal{M}\|_{[(X^{\frac{1}{p}})^{\prime}]^{\frac{1}{\tau^{\prime}}}};
    \]
    \item $\mathcal{R}(h^{\tau^{\prime}})\in A_{1}$ and $(\mathcal{R}(h^{\tau^{\prime}}))^{\frac{1}{\tau^{\prime}}}\in RH_{\tau^{\prime}}$.
    \end{enumerate}\par
    Now fix $(f,g)\in\mathcal{F}$ such that $\|f\|_{X}<\infty$. Then by duality, we have
    \[
    \begin{aligned}
        \|f\|_{X}^{p}
        &=\left\|  \vert f\vert^{p}  \right\|_{X^{\frac{1}{p}}}\\
        &\leq\sup_{\|h\|_{(X^{1/p})^{\prime}}=1}\int_{\mathbb R^{n}}\vert f(x)\vert^{p}\vert h(x)\vert dx\\
        &\leq\sup_{\|h\|_{(X^{1/p})^{\prime}}=1}\int_{\mathbb R^{n}}\vert f(x)\vert^{p}\left[\mathcal{R}\left(\vert h\vert^{\tau^{\prime}}\right)\right]^{\frac{1}{\tau^{\prime}}} dx.
    \end{aligned} 
    \]
    Let $H=\left[\mathcal{R}\left(\vert h\vert^{\tau^{\prime}}\right)\right]^{\frac{1}{\tau^{\prime}}}$. By \cite[Lemma 3.22]{yiqun2023boundedness} and the above properties,
    \[
    \begin{aligned}
        \sup_{\|h\|_{(X^{1/p})^{\prime}}=1}\int_{\mathbb R^{n}}\vert f(x)\vert^{p}H(x)dx
        &\leq\sup_{\|h\|_{(X^{1/p})^{\prime}}=1}\left\|  \vert f\vert^{p}  \right\|_{X^{1/p}}\left\| H\right\|_{(X^{1/p})^{\prime}}\\
        &=\sup_{\|h\|_{(X^{1/p})^{\prime}}=1}\|f\|_{X}^{p}\left\| \mathcal{R}\left(\vert h\vert^{\tau^{\prime}}\right)  \right\|_{[(X^{1/p})^{\prime}]^{1/\tau^{\prime}}}^{\frac{1}{\tau^{\prime}}}\\
        &\leq\sup_{\|h\|_{(X^{1/p})^{\prime}}=1} 2\|f\|_{X}^{p}\left\|\vert h\vert^{\tau^{\prime}}\right\|_{[(X^{1/p})^{\prime}]^{1/\tau^{\prime}}}^{\frac{1}{\tau^{\prime}}}\\
        &=\sup_{\|h\|_{(X^{1/p})^{\prime}}=1}2\|f\|_{X}^{p}\|h\|_{(X^{1/p})^{\prime}}\\
        &<\infty.
    \end{aligned}
    \]
    Therefore, by applying the fact that $H\in RH_{\tau^{\prime}}$ and the hypothesis of the lemma, we obtain that
    \[
    \begin{aligned}
        \|f\|_{X}^{p}
        &\leq\sup_{\|h\|_{(X^{1/p})^{\prime}}=1}\int_{\mathbb R^{n}}\vert f(x)\vert^{p}H(x) dx\\
        &\lesssim\sup_{\|h\|_{(X^{1/p})^{\prime}}=1}\int_{\mathbb R^{n}}\vert g(x)\vert^{p}H(x)dx\\
        &\lesssim\sup_{\|h\|_{(X^{1/p})^{\prime}}=1}\|g\|_{X}^{p}\|h\|_{(X^{1/p})^{\prime}}\\
        &\lesssim\|g\|_{X}^{p},
    \end{aligned}
    \]
    where the penultimate inequality follows from the above estimate with $g$ in place of $f$. Therefore, we complete the proof.
\end{proof}

\section{Maximal function characterizations}\label{se3}
In this section, we will give several equivalent characterizations for the local Hardy space $h_{X}(\mathbb R^{n})$ associated with $X$, which will play a key role in the proof of atomic decomposition characterizations in the next section.
For more details about real-variable characterizations of Hardy type spaces, see for instance \cite{CFJY,CFYY,LFFY}. 
\par

In order to obtain the equivalence between the local vertical grand maximal function and the local non-tangential grand maximal function, we need introduce the following functions as an intermediate step. For $\psi\in\mathcal{S}(\mathbb R^{n})$, we write $\psi_{j}(x)=2^{jn}\psi(2^{j}x)$ for all $j\in\mathbb N$. 
\begin{definition}\label{def3.1}
    Let $\psi_{0}\in\mathcal{S}(\mathbb R^{n})$ with $\int\psi_{0}(x)dx\neq0$ and $L\in[0,\infty)$. The \emph{local vertical maximal function} $\psi_{0}(f)(x)$ is defined by
    \[
    \psi_{0}^{+}(f)(x):=\sup_{j\in\mathbb N}\left\vert 
(\psi_{0})_{j}\ast f(x)  \right\vert.
    \]
    The \emph{local tangential Peetre-type maximal function} $\psi_{0,L}^{**}(f)(x)$ is defined by
    \[
    \psi_{0,L}^{**}(f)(x):=\sup_{j\in\mathbb N,y\in\mathbb R^{n}}\frac{\left\vert (\psi_{0})_{j}\ast f(x-y) \right\vert}{(1+2^{j}\vert y\vert)^{L}}.
    \]
    The \emph{local non-tangential maximal function} $(\psi_{0})_{\nabla}^{*}(f)(x)$ of $f$ is defined by
    \[
    (\psi_{0})_{\nabla}^{*}(f)(x)=\sup_{\vert x-y\vert<t<1}\vert(\psi_{0})_{t}\ast f(y)\vert.
    \]
\end{definition}
\begin{remark}
    Obviously, for any $x\in\mathbb R^{n}$, we have
    \[
    \psi_{0}^{+}(f)(x)\leq(\psi_{0})_{\nabla}^{*}(f)(x)\lesssim\psi_{0,L}^{**}(f)(x).
    \]
\end{remark}
Next we present the main theorem in this section.
\begin{theorem}\label{th3.3}
    Let $X$ be a ball-quasi Banach function space satisfying Assumption~\ref{ass2.7} and $\psi_{0}\in\mathcal{S}(\mathbb R^{n})$ such that $\int_{\mathbb R^{n}}\psi_{0}(x)dx\neq0$. Then for any large enough integer $N$ and all $f\in\mathcal{S}^{\prime}(\mathbb R^{n})$, 
    \[
    \begin{aligned}
        \left\| f \right\|_{h_{X}}
        &\sim\left\|\psi_{0}^{+}(f)\right\|_{X}\sim\left\|\psi_{0,L}^{**}(f)\right\|_{X}\sim\left\| (\psi_{0})_{\nabla}^{*}(f) \right\|_{X}\\
        &\sim\left\|\tilde{\mathcal{G}}_{N}(f)\right\|_{X}\sim\left\|\mathcal{G}_{N}^{0}(f)\right\|_{X}\sim\left\|\tilde{\mathcal{G}}_{N}^{0}(f)\right\|_{X}
    \end{aligned}
    \]
    where the implicit equivalent positive constants are independent of $f$.
\end{theorem}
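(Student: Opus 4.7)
The plan is to prove the seven equivalences via a cycle of inequalities, reducing everything to three non-trivial $X$-norm estimates. The following pointwise bounds are immediate: from the Remark after Definition~\ref{def3.1} one has $\psi_0^+(f)\le(\psi_0)_{\nabla}^*(f)\lesssim\psi_{0,L}^{**}(f)$; from the inclusion $\mathcal{S}_{N,1}(\mathbb R^n)\subset\mathcal{S}_{N,2^{3(10+n)}}(\mathbb R^n)$ and the definitions of the local (non-)tangential grand maximal functions, one obtains $\mathcal{G}_N^0(f)\le\mathcal{G}_N(f)\le\tilde{\mathcal{G}}_N(f)$ and $\mathcal{G}_N^0(f)\le\tilde{\mathcal{G}}_N^0(f)\le\tilde{\mathcal{G}}_N(f)$. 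Since $\|f\|_{h_X}=\|\mathcal{G}_N(f)\|_X$, everything reduces to the three estimates (I) $\|\psi_{0,L}^{**}(f)\|_X\lesssim\|\psi_0^+(f)\|_X$, (II) $\|\tilde{\mathcal{G}}_N(f)\|_X\lesssim\|\psi_{0,L}^{**}(f)\|_X$, and (III) $\|\psi_0^+(f)\|_X\lesssim\|\mathcal{G}_N^0(f)\|_X$, which link into a single chain.

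For (III), by a harmless rescaling the test function $\psi_0$ may be taken (up to a multiplicative constant) to lie in the class $\mathcal{S}_{N,1}(\mathbb R^n)$, so the pointwise bound $\psi_0^+(f)\lesssim\mathcal{G}_N^0(f)$ is then immediate from the definition. For (I), the essential step is the pointwise Peetre-type estimate
\[
\psi_{0,L}^{**}(f)(x)\lesssim \mathcal{M}^{(\theta)}\bigl(\psi_0^+(f)\bigr)(x)
\]
for some $\theta\in(0,1]$ satisfying $L\theta>n$, where $\theta$ is the parameter in Assumption~\ref{ass2.7}. Fixing $j\in\mathbb N$ and $y\in\mathbb R^n$, a Calderón-type reproducing identity lets one write $(\psi_0)_j\ast f(x-y)$ as a superposition that can be dominated by an $L^\theta$-average of $\psi_0^+(f)$ over a ball of radius $\sim 2^{-j}(1+2^j|y|)$; the volume factor $(1+2^j|y|)^{n/\theta}$ is absorbed into the weight $(1+2^j|y|)^L$ precisely because $L\theta>n$. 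Taking $X$-norms and invoking the scalar case of Assumption~\ref{ass2.7} (all but one $f_j$ set to zero yields the boundedness of $\mathcal{M}^{(\theta)}$ on $X$) gives (I).

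For (II), I fix an auxiliary $\psi_0\in\mathcal{S}(\mathbb R^n)$ supported in a small ball with non-vanishing mean, and construct $\eta\in\mathcal{S}(\mathbb R^n)$ and a low-frequency bump $\Phi\in\mathcal{S}(\mathbb R^n)$ such that the local Calderón identity $f=\Phi\ast f+\sum_{j\in\mathbb N}\eta_j\ast(\psi_0)_j\ast f$ holds on $\mathcal{S}'(\mathbb R^n)$. For $\varphi\in\mathcal{S}_{N,2^{3(10+n)}}(\mathbb R^n)$, $0<t<1$, and $|x-z|<t$, substituting this identity into $\varphi_t\ast f(z)$ and estimating each summand $\varphi_t\ast\eta_j\ast(\psi_0)_j\ast f(z)$ via the rapid decay of $\varphi,\eta$ and the very definition of $\psi_{0,L}^{**}(f)$, then summing over $j$, produces the pointwise bound $\tilde{\mathcal{G}}_N(f)(x)\lesssim\psi_{0,L}^{**}(f)(x)$. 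The enlargement of the support radius from $1$ to $2^{3(10+n)}$ in the definition of $\mathcal{G}_N$ is precisely what guarantees that the convolutions $\varphi_t\ast\eta_j$ arising in this argument remain admissible test functions for the grand maximal function. Taking $X$-norms and again invoking Assumption~\ref{ass2.7} closes (II).

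The main obstacle is the interplay between the local scale restriction $t\in(0,1)$ and the otherwise infinite-scale reproducing formula used in (II): the classical Calderón identity runs over all $j\in\mathbb Z$, whereas only scales $j\ge 1$ are available here, so the low-frequency term $\Phi\ast f$ must be carried through and shown to be controlled by the local Peetre maximal function on the right-hand side. A secondary technical point is to choose $N$ large enough, and $L$ appropriately, so that the decay required of $\varphi$ and $\eta$ in (II) and the constraint $L\theta>n$ in (I) are simultaneously met; this is exactly the role of the \emph{sufficiently large} integer $N$ in the theorem's hypothesis.
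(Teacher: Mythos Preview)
Your overall strategy matches the paper's: the key pointwise Peetre bound $\psi_{0,L}^{**}(f)\lesssim\mathcal{M}^{(\theta)}(\psi_0^+(f))$ is exactly Lemma~\ref{le3.4} (your step (I)), and the remaining equivalences are deferred in the paper to \cite[Theorem 3.14]{yang2011weighted}, which carries out essentially the reproducing-formula argument you sketch in (II).

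There is, however, a gap in step (III). You claim that ``by a harmless rescaling'' the given $\psi_0\in\mathcal{S}(\mathbb R^n)$ may be taken to lie in $\mathcal{S}_{N,1}(\mathbb R^n)$ up to a multiplicative constant. But membership in $\mathcal{S}_{N,1}$ requires ${\rm supp}\,\psi_0\subset B(0,1)$, and no rescaling of, say, a Gaussian produces compact support; so for a general $\psi_0$ the pointwise bound $\psi_0^+(f)\lesssim\mathcal{G}_N^0(f)$ is \emph{not} immediate from the definition. The standard repair is to run your chain first with a fixed compactly supported $\varphi_0\in\mathcal{S}_{N,1}$, for which (III) \emph{is} trivial, thereby obtaining all the grand-maximal equivalences, and then to insert one additional pointwise estimate $\psi_{0,L}^{**}(f)\lesssim\varphi_{0,L'}^{**}(f)$ comparing the Peetre maximal functions built from the two test functions. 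That last bound is proved by the same reproducing-formula mechanism as in (II): expand $(\psi_0)_j\ast f$ via the local Calder\'on identity built from $\varphi_0$ and estimate each piece against $\varphi_{0,L'}^{**}(f)$. With this extra step in place, the cycle closes for arbitrary $\psi_0$ with $\int\psi_0\neq0$.
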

In order to obtain equivalent characterizations as above, we give the next lemma, which are inspired by \cite[Lemma 3.2]{nakai2012hardy}.
\begin{lemma}\label{le3.4}
    For all $f\in\mathcal{S}^{\prime}(\mathbb R^{n})$ and $0<\theta<1$, there exists $L_{\theta}$ so that for all $L\geq L_{\theta}$, we have
    \[
    \psi_{0,L}^{**}(f)(x)\lesssim\mathcal{M}^{(\theta)}(\psi_{0}^{+}(f))(x).
    \]
\end{lemma}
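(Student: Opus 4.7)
The plan is to adapt the classical Fefferman--Stein comparison of tangential and radial (vertical) Peetre-type maximal functions to the local-scale setting ($j \in \mathbb{N}$) here, following the strategy suggested in \cite{nakai2012hardy}.

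To avoid the apparent circularity that would arise if the left-hand side were infinite, I would first truncate. Since $f \in \mathcal{S}^\prime(\mathbb{R}^n)$, each $(\psi_0)_j \ast f$ is a smooth function of polynomial growth of some finite order $M$ controlled by a seminorm of $f$; restricting the supremum defining $\psi_{0,L}^{**}(f)$ to $1 \leq j \leq N$ gives a truncated maximal function $\psi_{0,L}^{**,N}(f)(x)$ which is pointwise finite whenever $L > M$. The aim is to prove the inequality for $\psi_{0,L}^{**,N}(f)$ with a constant independent of $N$, and then let $N \to \infty$.

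Next, I would fix $x,y \in \mathbb{R}^n$ and $1 \leq j \leq N$, and set $z := x-y$. For $r := \delta 2^{-j}$ with $\delta \in (0,1)$ to be chosen, the mean-value theorem gives, for each $w \in B(z,r)$,
\[
|(\psi_0)_j \ast f(z)| \leq |(\psi_0)_j \ast f(w)| + r \sup_{\xi \in B(z,r)} \bigl|\nabla((\psi_0)_j \ast f)(\xi)\bigr|.
\]
Writing $\nabla((\psi_0)_j \ast f) = 2^j (\nabla \psi_0)_j \ast f$ and exploiting a Calder\'on-type reproducing formula (available because $\widehat{\psi_0}(0) = \int \psi_0 \neq 0$) to express $\nabla \psi_0$ as a rapidly convergent superposition of translates and dilates of $\psi_0$, I would obtain the pointwise derivative bound
\[
\bigl|(\nabla \psi_0)_j \ast f(\xi)\bigr| \lesssim \psi_{0,L}^{**,N}(f)(x)\,(1+2^j|x-\xi|)^L \lesssim \psi_{0,L}^{**,N}(f)(x)\,(1+2^j|y|)^L,
\]
where the second estimate uses $|x-\xi| \leq |y| + 2^{-j}$. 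Raising the whole inequality to the power $\theta \in (0,1)$ and averaging over $w \in B(z,r)$ yields the sub-mean-value type inequality
\[
|(\psi_0)_j \ast f(z)|^\theta \leq \frac{C}{|B(z,r)|}\int_{B(z,r)} |(\psi_0)_j \ast f(w)|^\theta\,dw + C\delta^\theta \bigl[\psi_{0,L}^{**,N}(f)(x)\bigr]^\theta (1+2^j|y|)^{L\theta}.
\]

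I would then divide by $(1+2^j|y|)^{L\theta}$ and take the supremum over $1 \leq j \leq N$ and $y \in \mathbb{R}^n$. Choosing $\delta$ so small that $C\delta^\theta \leq 1/2$, the second term is absorbed into the left-hand side thanks to the finiteness built in by truncation. For the first term, I would bound $|(\psi_0)_j \ast f(w)| \leq \psi_0^+(f)(w)$ for $j \geq 1$, enlarge $B(z,r) \subset B(x,|y|+r)$ at the cost of a factor $|B(x,|y|+r)|/|B(z,r)| \sim (1+2^j|y|)^n$, and choose $L \geq L_\theta$ with $L_\theta > n/\theta$ so that the factor $(1+2^j|y|)^{n-L\theta}$ is uniformly bounded. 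The right-hand side then collapses to a constant times $\mathcal{M}(|\psi_0^+(f)|^\theta)(x)$; taking $\theta$-th roots and passing $N \to \infty$ gives the lemma. The main obstacle is controlling $(\nabla \psi_0)_j \ast f(\xi)$ by the tangential maximal function of $\psi_0$ itself: since $\nabla \psi_0$ has vanishing integral while $\psi_0$ does not, it cannot be written as a simple scalar multiple of or convolution against $\psi_0$, and a genuine Calder\'on-type reproducing identity is required—this is precisely where the hypothesis $\int \psi_0 \neq 0$ enters.
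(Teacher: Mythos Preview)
Your mean-value strategy is a recognizable alternative to the paper's direct bootstrap, but there is a genuine incompatibility between your truncation device and your gradient estimate. The Calder\'on-type reproducing formula you invoke to control $(\nabla\psi_0)_j\ast f$ necessarily expresses it through $(\psi_0)_k\ast f$ over \emph{all} finer scales $k\ge j$ (with exponentially decaying coefficients); for $j\le N$ these include scales $k>N$. Hence the bound you write,
\[
\bigl|(\nabla\psi_0)_j\ast f(\xi)\bigr|\lesssim \psi_{0,L}^{**,N}(f)(x)\,(1+2^j|x-\xi|)^L,
\]
does not follow: the truncated maximal function on the right only sees levels $k\le N$. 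What the reproducing formula actually delivers is the same inequality with the \emph{untruncated} $\psi_{0,L}^{**}(f)(x)$ on the right, and then your absorption step collapses because the term to be absorbed is larger than the left-hand side. Truncating in $j$ therefore does not rescue the circularity here, contrary to what happens in the band-limited (Peetre--Triebel) situation where a single-scale Bernstein inequality is available.

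The paper sidesteps this by dropping the mean-value step and applying the reproducing formula directly to $(\psi_0)_j\ast f$, obtaining
\[
\psi_{0,L}^{**}(f)(x)\lesssim\bigl(\psi_{0,L}^{**}(f)(x)\bigr)^{1-\theta}\,G_{L,\theta}(f)(x),
\]
and then resolving the circularity not by truncation but by a bootstrap in $L$: for $f\in\mathcal S'$ there is some (possibly large) $L_f$ at which the Peetre maximal function is finite, one runs the self-improvement at level $L_f$ to transfer finiteness down to the target $L$, and only then divides. Your argument can be repaired along these lines, but once the Calder\'on identity is in hand it is more economical to use it on $(\psi_0)_j\ast f$ itself than on its gradient---your correctly identified ``main obstacle'' and its resolution end up doing all the real work, and the mean-value detour becomes redundant.
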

\begin{proof}
    Denote by $\delta\in\mathcal{S}^{\prime}(\mathbb R^{n})$ the Dirac delta and let $L\gg1$. Let $\varphi, \Psi\in\mathcal{S}(\mathbb R^{n})$ be such that
    \[
    \varphi\ast\psi_{0}+\sum_{k=1}^{\infty}\Psi_{k}\ast((\psi_{0})_{k}-(\psi_{0})_{k-1})=\delta,\quad
    \int_{\mathbb R^{n}}x^{\alpha}\Psi(x)dx=0\ (\vert\alpha\vert\leq n+3L+2)
    \]
    in the topology of $\mathcal{S}^{\prime}(\mathbb R^{n})$. We refer to \cite[Theorem 1.6]{rychkov2001littlewood} for more details on the pair $(\varphi,\Psi)$. Then, we have
    \[
    \varphi_{j}\ast(\psi_{0})_{j}+\sum_{k=j+1}^{\infty}\Psi_{k}\ast((\psi_{0})_{k}-(\psi_{0})_{k-1})=\delta
    \]
    by the dilation. Then, by the triangle inequality, we can obtain
    \[
    \begin{aligned}
    \frac{(\psi_{0})_{j}\ast f(x-y)}{(1+2^{j}\vert y\vert)^{L}}
    &\leq\frac{\vert (\psi_{0})_{j}\ast\varphi_{j}\ast(\psi_{0})_{j}\ast f(x-y)\vert}{(1+2^{j}\vert y\vert)^{L}}\\
    &+\sum_{k=j+1}^{\infty}\frac{\vert (\psi_{0})_{j}\ast\Psi_{k}\ast((\psi_{0})_{k}-(\psi_{0})_{k-1})\ast f(x-y)\vert}{(1+2^{j}\vert y\vert)^{L}}\\
    &=\uppercase\expandafter{\romannumeral1}+\uppercase\expandafter{\romannumeral2}.
    \end{aligned}
    \]
    For $\uppercase\expandafter{\romannumeral2}$, we have
    \[
    \begin{aligned}
    &\frac{\vert (\psi_{0})_{j}\ast\Psi_{k}\ast((\psi_{0})_{k}-(\psi_{0})_{k-1})\ast f(x-y)\vert}{(1+2^{j}\vert y\vert)^{L}}\\
    &\leq\int_{\mathbb R^{n}}\frac{\vert (\psi_{0})_{j}\ast\Psi_{k}(z)((\psi_{0})_{k}-(\psi_{0})_{k-1})\ast f(x-y-z)\vert}{(1+2^{j}\vert y\vert)^{L}}dz.
    \end{aligned}
    \]
    From \cite[Lemma 2.5]{nakai2012hardy}, we obtain
    \[
    \vert(\psi_{0})_{j}\ast\Psi_{k}(z)\vert\lesssim2^{jn-3(k-j)L}(1+2^{j}\vert z\vert)^{-L}.
    \]
    Thus, it follows that
    \[
    \begin{aligned}
        &\frac{\vert (\psi_{0})_{j}\ast\Psi_{k}\ast((\psi_{0})_{k}-(\psi_{0})_{k-1})\ast f(x-y)\vert}{(1+2^{j}\vert y\vert)^{L}}\\
        &\lesssim\int_{\mathbb R^{n}}\frac{2^{jn-3(k-j)L}\vert((\psi_{0})_{k}-(\psi_{0})_{k-1})\ast f(x-y-z)\vert}{(1+2^{j}\vert y\vert)^{L}(1+2^{j}\vert z\vert)^{L}}dz.
    \end{aligned}
    \]
    By applying the fact that $(1+\vert x\vert)(1+\vert y\vert)\geq1+\vert x-y\vert$, we obtain that
    \[
    \begin{aligned}
    &\frac{\vert (\psi_{0})_{j}\ast\Psi_{k}\ast((\psi_{0})_{k}-(\psi_{0})_{k-1})\ast f(x-y)\vert}{(1+2^{j}\vert y\vert)^{L}}\\
    &\lesssim\int_{\mathbb R^{n}}\frac{2^{jn-3(k-j)L}\vert((\psi_{0})_{k}-(\psi_{0})_{k-1})\ast f(x-y-z)\vert}{(1+2^{j}\vert y+z\vert)^{L}}dz\\
    &\lesssim(\psi_{0,L}^{**}(f)(x))^{1-\theta}\int_{\mathbb R^{n}}\frac{2^{jn-3(k-j)L\theta}}{(1+2^{j}\vert y+z\vert)^{L\theta}}\\
    &\times\left(\left\vert(\psi_{0})_{k}\ast f(x-y-z)\right\vert^{\theta}+\left\vert(\psi_{0})_{k-1}\ast f(x-y-z)\right\vert^{\theta}\right)dz.
    \end{aligned}
    \]
    For $\uppercase\expandafter{\romannumeral1}$, we can repeat the similar argument as used above.
    Therefore, we have
    \[
    \begin{aligned}
        &\frac{(\psi_{0})_{j}\ast f(x-y)}{(1+2^{j}\vert y\vert)^{L}}\\
        &\lesssim(\psi_{0,L}^{**}(f)(x))^{1-\theta}\sum_{k=j}^{\infty}\int_{\mathbb R^{n}}\frac{2^{jn-3(k-j)L\theta}}{(1+2^{j}\vert y+z\vert)^{L\theta}}\left(\left\vert(\psi_{0})_{k}\ast f(x-y-z)\right\vert^{\theta}\right)dz.
    \end{aligned}
    \]
    Let 
    \[
    G_{L,\theta}(f)(x)\equiv\sup_{y\in\mathbb R^{n}}\sup_{j\in\mathbb N}\sum_{k=j}^{\infty}\int_{\mathbb R^{n}}\frac{2^{jn-3(k-j)L\theta}}{(1+2^{j}\vert y+z\vert)^{L\theta}}\left(\left\vert(\psi_{0})_{k}\ast f(x-y-z)\right\vert^{\theta}\right)dz.
    \]
    Thus, we obtain
    \[
    \psi_{0,L}^{**}(f)(x)\lesssim(\psi_{0,L}^{**}(f)(x))^{1-\theta}G_{L,\theta}(f)(x).
    \]
    Then, we claim that
    \begin{equation}\label{eq1}
    (\psi_{0,L}^{**}(f)(x))^{\theta}\lesssim G_{L,\theta}(f)(x).
    \end{equation}
    It follows that
    \begin{equation}\label{eq2}
        (\psi_{0,L}^{**}(f)(x))^{\theta}\lesssim\mathcal{M}\left[\sup_{k\in\mathbb N}\left\vert (\psi_{0})_{k}\ast f  \right\vert^{\theta} \right](x).
    \end{equation}
    The proof of (\ref{eq2}) is based on the technique of \cite[Chapter 5]{stromberg1989}.
    Now, we try to prove the claim. If $G_{L,\theta}(f)(x)=\infty$, we trivially have (\ref{eq1}). If $G_{L,\theta}(f)(x)<\infty$, it suffices to show $\psi_{0,L}^{**}(f)(x)<\infty$. Since $f\in\mathcal{S}^{\prime}(\mathbb R^{n})$, there exists $L_{f}\in\mathbb N$ such that $\psi_{0,L_{f}}^{**}(f)(x)<\infty$. If $L\geq L_{f}$, $\psi_{0,L}^{**}(f)(x)\leq\psi_{0,L_{f}}^{**}(f)(x)<\infty$. If $L\leq L_{f}$, notice that $G_{L_{f},\theta}(f)(x)\leq G_{L,\theta}(f)(x)<\infty$. Thus, we have
    \[
    \left\vert (\psi_{0})_{j}\ast f(x) \right\vert^{\theta}\leq(\psi_{0,L_{f}}^{**}(f)(x))^{\theta}\lesssim G_{L_{f},\theta}(f)(x).
    \]
    Then, we conclude that
    \[
    (\psi_{0,L}^{**}(f)(x))^{\theta}\lesssim G_{L,\theta}(f)(x)<\infty.
    \]
    Therefore, we finish the proof of Lemma~\ref{le3.4}.
\end{proof}
Finally, by Lemma~\ref{le3.4} and Assumption~\ref{ass2.7}, we can obtain Theorem~\ref{th3.3}. Since the proof of Theorem~\ref{th3.3} is similar to \cite[Theorem 3.14]{yang2011weighted}, we omit the details here.

\section{Atomic characterization}\label{se4}
In this section, we will establish the atomic decomposition characterization of $h_{X}(\mathbb R^{n})$ associated with $X$. Firstly, we can present the reconstruction theorem and the atom decomposition theorem, respectively. Recall that $X$ is said to have an \emph{absolutely continuous quasi-norm} if, for any $f\in X$ and any measurable subsets $\{E_{j}\}_{j\in\mathbb N}\subset\mathbb R^{n}$ with both $E_{j+1}\subset E_{j}$ for any $j\in\mathbb N$ and $\bigcap_{j\in\mathbb N}E_{j}=\emptyset$,  $ \|f\chi_{E_{j}} \|_{X}\downarrow0$ as $j\to\infty$.
\begin{theorem}\label{th4.1}
        Let $X$ be a ball quasi-Banach function space satisfying Assumption~{\ref{ass2.7}}. Let $p_{0},\ q_{0}$ and $\tau$ be as in Lemma~\ref{le2.8}. Assume that $X^{1/p_{0}}$ is a ball Banach function space and the Hardy--Littlewood maximal operator $\mathcal{M}$ is bounded on $[(X^{1/p_{0}})^{\prime}]^{1/\tau^{\prime}}$.
        Further assume that $X$ has an absolutely continuous quasi-norm. Suppose that $1<q\leq\infty$ and $d\in\mathbb Z_{+}$. Given countable collections of cubes $\{Q_{j}\}_{j=1}^{\infty}$, of non-negative coefficients $\{\lambda_{j}\}_{j=1}^{\infty}$ and of the local-$(X,q,d)$-atoms $\{a_{j}\}_{j=1}^{\infty}$, if 
    \[
    \left\|  \sum_{j=1}^{\infty}\frac{\lambda_{j}\chi_{Q_{j}}}{\|\chi_{Q_{j}}\|_{X}} \right\|_{X}<\infty,
    \]
    then the series $f=\sum_{j=1}^{\infty}\lambda_{j}a_{j}$ converges in $h_{X}(\mathbb R^{n})$ and satisfies
    \[
    \|f\|_{h_{X}}\leq C\left\| \sum_{j=1}^{\infty}\frac{\lambda_{j}\chi_{Q_{j}}}{\|\chi_{Q_{j}}\|_{X}} \right\|_{X}.
    \]
\end{theorem}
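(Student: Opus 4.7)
The strategy has three phases: a pointwise estimate of $\mathcal{G}_N(a_j)$ for each atom, summation via Lemma~\ref{le2.8} and Assumption~\ref{ass2.7}, and a convergence argument exploiting absolute continuity of the quasi-norm. At the outset, I would, without loss of generality, shrink the parameter $q_0$ from Lemma~\ref{le2.8} so that $1 < q_0 \leq q$, which is compatible with the lemma's hypotheses.

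For a local-$(X,q,d)$-atom $a_j$ supported in $Q_j$, the first step is to split $\mathcal{G}_N(a_j)$ according to whether $x \in Q_j^*$. On $Q_j^*$, the standard pointwise domination by the Hardy--Littlewood maximal function yields $\mathcal{G}_N(a_j)(x) \lesssim \mathcal{M}(a_j)(x) \leq \mathcal{M}^{(q_0)}(a_j)(x)$. For the tail, the local constraint $t \in (0,1)$ produces a case split. If $|Q_j| \geq 1$, the support condition ${\rm supp}\,\psi \subset B(0, 2^{3(10+n)})$ forces $\mathcal{G}_N(a_j)$ to vanish outside a fixed dilation $\widetilde{c}Q_j$ (with $\widetilde{c}$ depending only on $n$), so the tail is absorbed into the main part by replacing $Q_j^*$ with $\widetilde{c}Q_j$. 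If $|Q_j| < 1$, a Taylor expansion of $\psi_t$ around the centre of $Q_j$ combined with the vanishing moments of $a_j$ up to order $d$ yields, for $x \notin Q_j^*$,
\[
\mathcal{G}_N(a_j)(x) \lesssim \frac{1}{\|\chi_{Q_j}\|_X}\,\bigl[\mathcal{M}(\chi_{Q_j})(x)\bigr]^{(n+d+1)/n} = \frac{1}{\|\chi_{Q_j}\|_X}\,\mathcal{M}^{(n/(n+d+1))}(\chi_{Q_j})(x).
\]

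For the main part $\bigl\|\sum_j \lambda_j \mathcal{M}^{(q_0)}(a_j) \chi_{Q_j^*}\bigr\|_X$, I would apply Lemma~\ref{le2.8} with the sequence $\{Q_j^*\}$ and $g_j = \lambda_j \mathcal{M}^{(q_0)}(a_j)$, then estimate the resulting averages by H\"older's inequality (using $q/q_0 > 1$), the $L^{q/q_0}$-boundedness of $\mathcal{M}$, and the atomic $L^q$-size, reducing to $\bigl\|\sum_j \lambda_j \chi_{Q_j^*}/\|\chi_{Q_j}\|_X\bigr\|_X$. Since $\chi_{Q_j^*} \lesssim \mathcal{M}(\chi_{Q_j}) \leq \mathcal{M}^{(\theta)}(\chi_{Q_j})$ pointwise for $\theta \in (0,1)$, Assumption~\ref{ass2.7} with $s = 1$ bounds this by the target quantity $\bigl\|\sum_j \lambda_j \chi_{Q_j}/\|\chi_{Q_j}\|_X\bigr\|_X$. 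The tail contribution for $|Q_j| < 1$ is handled directly by Assumption~\ref{ass2.7} after choosing $d$ large enough that $n/(n+d+1) \leq \theta_0$, where $\theta_0$ denotes the exponent appearing in Assumption~\ref{ass2.7}.

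Finally, applying the above estimates to the partial sums $S_N := \sum_{j=1}^N \lambda_j a_j$ yields
\[
\|S_N - S_M\|_{h_X} \lesssim \Bigl\|\sum_{j=M+1}^{N} \frac{\lambda_j \chi_{Q_j}}{\|\chi_{Q_j}\|_X}\Bigr\|_X,
\]
and the absolute continuity of the quasi-norm of $X$ forces the right-hand side to $0$ as $M, N \to \infty$. Hence $\{S_N\}$ is Cauchy in $h_X(\mathbb R^n)$, its limit $f$ lies in $h_X(\mathbb R^n)$ with the stated estimate, and the identification $f = \sum_j \lambda_j a_j$ in $\mathcal{S}'(\mathbb R^n)$ follows from the continuous embedding $h_X(\mathbb R^n) \hookrightarrow \mathcal{S}'(\mathbb R^n)$. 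I expect the main obstacle to be the pointwise tail estimate for the small-cube atoms, which requires a careful interplay between the vanishing moments and the Taylor expansion of $\psi_t$; the compact-support treatment of the large-cube atoms is an additional subtlety specific to the local setting.
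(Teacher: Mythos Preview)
Your strategy coincides with the paper's: the same split into near ($Q_j^*$) and far regions, the Taylor-expansion tail estimate for small cubes versus the compact-support observation for large cubes, Lemma~\ref{le2.8} to sum the near parts, Assumption~\ref{ass2.7} to control the tails and to pass from $\chi_{Q_j^*}$ back to $\chi_{Q_j}$, and a Cauchy-sequence argument (with absolute continuity of the quasi-norm) for convergence.

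Two small remarks. First, you invoke Assumption~\ref{ass2.7} ``with $s=1$'', but the hypothesis only supplies \emph{some} $s\in(0,1]$; the paper handles the passage $\chi_{Q_j^*}\to\chi_{Q_j}$ instead by writing $\chi_{Q_j^*}\lesssim\bigl(\mathcal{M}\chi_{Q_j}\bigr)^{\gamma}$ for a suitable $\gamma>1$ and moving to the convexification $X^{\gamma}$ before applying the vector-valued maximal inequality, which avoids assuming $s=1$. Second, you phrase the tail requirement as ``choosing $d$ large enough that $n/(n+d+1)\le\theta_0$'', but $d$ is given in the statement; the paper's proof tacitly needs the same lower bound on $d$, so this is a common implicit restriction on the statement rather than a gap peculiar to your argument.
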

\begin{theorem}\label{th4.2}
    Let $X$ be a ball quasi-Banach function space satisfying Assumption~{\ref{ass2.7}}. Assume that $X$ has an absolutely continuous quasi-norm. Suppose that $1<q\leq\infty$, $\eta\in(0,\infty)$ and $d\in\mathbb Z_{+}$. If $f\in h_{X}(\mathbb R^{n})$, then there exist non-negative coefficients $\{\lambda_{j}\}_{j=1}^{\infty}$ and the local-$(X,q,d)$-atoms $\{a_{j}\}_{j=1}^{\infty}$ such that $f=\sum_{j=1}^{\infty}\lambda_{j}a_{j}$, where the series converges almost everywhere and in $\mathcal{S}^{\prime}(\mathbb R^{n})$, and that
    \[
    \left\|  \left( \sum_{j=1}^{\infty}\left( \frac{\lambda_{j}\chi_{Q_{j}}}{\|\chi_{Q_{j}}\|_{X}} \right)^{\eta} \right)^{\frac{1}{\eta}} \right\|_{X}\leq C_{\eta}\|f\|_{h_{X}}.
    \]
\end{theorem}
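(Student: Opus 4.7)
The plan is to carry out a Calder\'on--Zygmund--Whitney decomposition based on the level sets of the local grand maximal function $\mathcal{G}_N(f)$, following the template of the global case in \cite{sawano2017hardy} but with the modifications appropriate to the local setting as in \cite{tan2023real}. By Theorem~\ref{th3.3}, $\|\mathcal{G}_N(f)\|_X \sim \|f\|_{h_X}$, so I may work directly with $\mathcal{G}_N(f) \in X$; note that $\mathcal{G}_N(f)$ is lower semicontinuous, so for each $k \in \mathbb{Z}$ the level set $\Omega_k := \{x \in \mathbb{R}^n : \mathcal{G}_N(f)(x) > 2^k\}$ is open. I apply a Whitney-type decomposition with a truncation at side-length equal to a fixed universal constant to produce a bounded-overlap family of cubes $\{Q_{k,i}\}_i$ covering $\Omega_k$, and choose a smooth partition of unity $\{\eta_{k,i}\}$ subordinate to the slightly enlarged cubes $\{Q^*_{k,i}\}$.

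I define the bad pieces as follows. For $\ell(Q_{k,i}) < 1$, let $P_{k,i}$ be the projection of $f$ onto polynomials of degree at most $d$ with respect to the weight $\eta_{k,i}$, and set $b_{k,i} := (f - P_{k,i})\eta_{k,i}$; this yields the vanishing-moment condition $\int b_{k,i}(x)\, x^\alpha\,dx = 0$ for $|\alpha| \leq d$. For $\ell(Q_{k,i}) \geq 1$, set $P_{k,i} := 0$ and $b_{k,i} := f \eta_{k,i}$; no moment cancellation is required by Definition~\ref{def2.6}(iii). Setting $g_k := f - \sum_i b_{k,i}$, maximal-function estimates (using that $\mathcal{G}_N(f)(y) \leq 2^{k+1}$ whenever $y \notin \Omega_{k+1}$) give the pointwise bound $\|b_{k,i}\|_{L^\infty} \lesssim 2^k$, so that $a_{k,i} := \lambda_{k,i}^{-1} b_{k,i}$ with $\lambda_{k,i} := C\, 2^k \|\chi_{Q^*_{k,i}}\|_X$ is a local-$(X,q,d)$-atom on $Q^*_{k,i}$. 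The telescoping identity $f = \sum_k (g_{k+1} - g_k)$ then assembles the atomic decomposition, with convergence almost everywhere and in $\mathcal{S}'(\mathbb R^n)$ following from the $L^\infty$ bound on $g_k$ combined with the absolute continuity of the quasi-norm of $X$.

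The final norm bound reduces, via bounded overlap of the Whitney cubes, to the chain
\[
\left\|\left(\sum_{k,i}\left(\frac{\lambda_{k,i}\chi_{Q^*_{k,i}}}{\|\chi_{Q^*_{k,i}}\|_X}\right)^{\eta}\right)^{1/\eta}\right\|_X \lesssim \left\|\left(\sum_{k \in \mathbb{Z}} 2^{k\eta}\chi_{\Omega_k}\right)^{1/\eta}\right\|_X \sim \|\mathcal{G}_N(f)\|_X,
\]
where the final comparison is the layer-cake identity $\sum_k 2^{k\eta}\chi_{\{\mathcal{G}_N(f) > 2^k\}} \sim \mathcal{G}_N(f)^\eta$. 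I expect the main obstacle to lie in the treatment of the large cubes $\ell(Q_{k,i}) \geq 1$, where no polynomial correction is applied: the standard cancellation-based argument from the global case is unavailable, so the $L^\infty$ bound on $b_{k,i}$ and its contribution to convergence in $\mathcal{S}'$ must be obtained by direct maximal-function estimates exploiting that test functions in $\mathcal{S}_{N,R}$ are bounded and compactly supported. A secondary bookkeeping subtlety is that $\eta \in (0,\infty)$ is arbitrary, so the implicit constant depends on $\eta$ via the exponent used in the layer-cake comparison; this dependence is absorbed into $C_\eta$.
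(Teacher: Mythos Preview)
Your plan has the right skeleton (level sets of $\mathcal{G}_N(f)$, Whitney cubes, partition of unity, telescoping, layer-cake estimate), but the identification of the atoms is wrong, and this is a genuine gap.

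You define the bad pieces $b_{k,i}$ at level $k$ and then claim $\|b_{k,i}\|_{L^\infty}\lesssim 2^k$. This is false: the $L^\infty$ bound of order $2^k$ holds for the \emph{good} function $g_k$, not for the bad pieces. The functions $b_{k,i}$ live on $\Omega_k$, where $\mathcal{G}_N(f)$ can be arbitrarily large, and for a general $f\in h_X$ (a priori only a tempered distribution) the expression $\|b_{k,i}\|_{L^\infty}$ need not even be finite. Consequently $a_{k,i}:=\lambda_{k,i}^{-1}b_{k,i}$ is not an atom, and in any case $\sum_{k,i}\lambda_{k,i}a_{k,i}=\sum_{k}(f-g_k)$ does not converge to $f$; it is not the telescoping series $\sum_k(g_{k+1}-g_k)$ you invoke in the next sentence.

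What the paper (following \cite{tan2023real}) actually does is to expand the difference
\[
g^{k+1}-g^k=\sum_i b^k_i-\sum_j b^{k+1}_j=\sum_i h^k_i,
\]
where each $h^k_i$ is built by redistributing the level-$(k+1)$ bad pieces among the level-$k$ Whitney cubes (with additional polynomial corrections in the small-cube case). It is these $h^k_i$ that satisfy $\|h^k_i\|_{L^\infty}\lesssim 2^k$, are supported in enlarged cubes $\tilde Q^k_i\subset\Omega_k$, and carry the required moment conditions when $\ell(\tilde Q^k_i)<1$; hence $a_{i,k}:=h^k_i/\lambda_{i,k}$ with $\lambda_{i,k}=C2^k\|\chi_{\tilde Q^k_i}\|_X$ are the atoms. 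The construction of $h^k_i$ and the verification of its properties (three cases depending on cube sizes) is the technical heart of the proof and is exactly what you have omitted. The paper also first carries this out for $f\in h_X\cap L^2$ (so that $f$ is a function and pointwise manipulations are legitimate) and passes to general $f\in h_X$ by density; your sketch does not address this reduction. Once the correct atoms are in hand, your layer-cake argument for the norm bound is fine and matches the paper.
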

Hence, we can immediately obtain the corollary as follows.
\begin{corollary}\label{cor4.3}
    Let $X$ be a ball quasi-Banach function space satisfying Assumption~{\ref{ass2.7}}. Let $p_{0},\ q_{0}$ and $\tau$ be as in Lemma~\ref{le2.8}. Assume that $X^{1/p_{0}}$ is a ball Banach function space and the Hardy--Littlewood maximal operator $\mathcal{M}$ is bounded on $[(X^{1/p_{0}})^{\prime}]^{1/\tau^{\prime}}$. Further assume that $X$ has an absolutely continuous quasi-norm. Suppose that $1<q\leq\infty$ and $d\in\mathbb Z_{+}$. Then $f\in\mathcal{S}^{\prime}(\mathbb R^{n})$ is in $h_{X}(\mathbb R^{n})$ if and only if there exists non-negative coefficients $\{\lambda_{j}\}_{j=1}^{\infty}$ and the local-$(X,q,d)$-atoms $\{a_{j}\}_{j=1}^{\infty}$ such that $f=\sum_{j=1}^{\infty}\lambda_{j}a_{j}$, where the series converges in $h_{X}(\mathbb R^{n})$, and that
    \[
    \|f\|_{h_{X}}\sim\inf\left\{  \left\|\sum_{j=1}^{\infty}\frac{\lambda_{j}\chi_{Q_{j}}}{\|\chi_{Q_{j}}\|_{X}}\right\|_{X}\colon f=\sum_{j=1}^{\infty}\lambda_{j}a_{j} \right\},
    \]
    where the infimum is taken over all decomposition of $f$ as above.
\end{corollary}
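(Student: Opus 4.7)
The plan is to deduce Corollary~\ref{cor4.3} as an essentially formal consequence of the reconstruction Theorem~\ref{th4.1} and the decomposition Theorem~\ref{th4.2}. The key observation is that Theorem~\ref{th4.2} is valid for every $\eta\in(0,\infty)$, so we are free to specialize to $\eta=1$, in which case its conclusion exactly matches the hypothesis of Theorem~\ref{th4.1}. The only nontrivial point is that Theorem~\ref{th4.2} delivers convergence only in $\mathcal{S}^{\prime}(\mathbb R^{n})$ and almost everywhere, while the corollary claims convergence in $h_{X}(\mathbb R^{n})$; this gap is closed by feeding the conclusion of Theorem~\ref{th4.2} back into Theorem~\ref{th4.1}.

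For the sufficiency direction, suppose $f=\sum_{j=1}^{\infty}\lambda_{j}a_{j}$ where each $a_{j}$ is a local-$(X,q,d)$-atom supported on a cube $Q_{j}$, the series converges in $h_{X}(\mathbb R^{n})$, and $\sum_{j}\lambda_{j}\chi_{Q_{j}}/\|\chi_{Q_{j}}\|_{X}$ has finite $X$-quasi-norm. Then Theorem~\ref{th4.1} applies directly and yields
\[
\|f\|_{h_{X}}\leq C\left\|\sum_{j=1}^{\infty}\frac{\lambda_{j}\chi_{Q_{j}}}{\|\chi_{Q_{j}}\|_{X}}\right\|_{X}.
\]
Taking the infimum over all such admissible decompositions produces the $\lesssim$ half of the claimed quasi-norm equivalence, and simultaneously shows that every $f$ admitting such a decomposition belongs to $h_{X}(\mathbb R^{n})$.

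For the necessity direction, let $f\in h_{X}(\mathbb R^{n})$. Apply Theorem~\ref{th4.2} with $\eta=1$ to obtain non-negative coefficients $\{\lambda_{j}\}_{j}$, supporting cubes $\{Q_{j}\}_{j}$ and local-$(X,q,d)$-atoms $\{a_{j}\}_{j}$ satisfying $f=\sum_{j=1}^{\infty}\lambda_{j}a_{j}$ in $\mathcal{S}^{\prime}(\mathbb R^{n})$ together with
\[
\left\|\sum_{j=1}^{\infty}\frac{\lambda_{j}\chi_{Q_{j}}}{\|\chi_{Q_{j}}\|_{X}}\right\|_{X}\leq C\|f\|_{h_{X}}<\infty.
\]
The finiteness of the right-hand side is precisely the hypothesis of Theorem~\ref{th4.1}, which then guarantees that the series converges in $h_{X}(\mathbb R^{n})$ to some $g\in h_{X}(\mathbb R^{n})$. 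The main (and essentially only) subtle step is identifying $g$ with $f$: since $h_{X}(\mathbb R^{n})\hookrightarrow\mathcal{S}^{\prime}(\mathbb R^{n})$ continuously through the local grand maximal function, the $h_{X}$-convergence of the partial sums to $g$ entails their $\mathcal{S}^{\prime}$-convergence to $g$, and uniqueness of distributional limits forces $g=f$. The same displayed inequality yields the $\gtrsim$ half of the quasi-norm equivalence by taking the infimum over all decompositions, completing the proof.
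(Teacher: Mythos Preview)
Your proof is correct and matches the paper's approach: the paper simply states ``Hence, we can immediately obtain the corollary as follows'' after Theorems~\ref{th4.1} and~\ref{th4.2}, without a separate argument. Your write-up spells out the one point the paper leaves implicit, namely that Theorem~\ref{th4.2} only yields convergence in $\mathcal{S}'(\mathbb R^{n})$ and that one must feed its output back into Theorem~\ref{th4.1} (using the embedding $h_X(\mathbb R^{n})\hookrightarrow\mathcal{S}'(\mathbb R^{n})$ established in the proof of Theorem~\ref{th4.1}) to upgrade to $h_X$-convergence and identify the limit with $f$.
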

\begin{proof}[{\bf Proof of Theorem~{\upshape\ref{th4.1}}}]

    Fix $\psi\in\mathcal{S}_{N,1}$ with $\int\psi(x)dx\neq0$. By Theorem~\ref{th3.3}, we have that
    \[
    \|f\|_{h_{X}}\sim\|\mathcal{G}_{N}^{0}(f)\|_{X}\equiv\left\|\sup_{0<t<1}\vert\psi_{t}\ast f\vert\right\|_{X}.
    \]
    Let every $\{a_{j}\}_{j=1}^{\infty}$ be a local-$(X,q,d)$-atom which is support in cubes $\{Q_{j}\}_{j=1}^{\infty}$. First, suppose that $\{\lambda_{j}\}_{j=1}^{\infty}$ has only a finite number of non-zero entries. Then, we consider two cases.\par
    When $\vert Q_{j}\vert<1$, we claim that
    \[
    \mathcal{G}_{N}^{0}\left[ \sum_{j=1}^{\infty}\lambda_{j}a_{j} \right](x)\leq C\sum_{j=1}^{\infty}\lambda_{j}\mathcal{M}(a_{j})(x)\chi_{Q_{j}^{*}}(x)+\frac{\lambda_{j}}{\|\chi_{Q_{j}}\|_{X}}\mathcal{M}(\chi_{Q_{j}})(x)^{\frac{n+d+1}{n}}\chi_{(Q_{j}^{*})^{c}}.
    \]
    In fact, when $x\in Q_{j}^{*}$, for any local-$(X,q,d)$-atom $a_{j}$, we just need the pointwise estimate
    \[
    \mathcal{G}_{N}^{0}(a_{j})(x)\leq C\mathcal{M}(a_{j}).
    \]
    Then, it suffices to prove that for all $x\in(Q_{j}^{*})^{c}$,
    \[
    \mathcal{G}_{N}^{0}(a_{j})(x)\leq C\frac{l(Q_{j})^{n+d+1}}{\|\chi_{Q_{j}}\|_{X}\vert x-x_{j}\vert^{n+d+1}}.
    \]
    For any $0<t<1$, let $P$ be the Taylor expansion of $\psi$ at the point $\frac{x-x_{j}}{t}$ with degree $d$. By applying the Taylor remainder theorem, we have
    \[
    \begin{aligned}
        &\left\vert \psi\left(\frac{x-y}{t}\right)-P\left(\frac{x-x_{j}}{t}\right) \right\vert\\
        &\leq C\sum_{\vert\alpha\vert=d+1}\left\vert D^{\alpha}\psi\left( \frac{\theta(x-y)+(1-\theta)(x-x_{j})}{t} \right) \right\vert\left\vert\frac{x_{j}-y}{t}\right\vert^{d+1},
        \end{aligned}
    \]
    where multi-index $\alpha\in\mathbb Z^{n}_{+}$ and $\theta\in(0,1)$. Since $0<t<1$ and $x\in(Q_{j}^{*})^{c}$, then we can get that ${\rm supp}(a_{j}\ast\psi_{t} )\subset B(x_{j},2\sqrt{n})$ and that $a_{j}\ast\psi_{t}(x)\neq0$ implies that $t>\frac{\vert x-x_{j}\vert}{2}$. Thus, for any $x\in(Q_{j}^{*})^{c}$, we have that
    \[
    \begin{aligned}
        \vert (a_{j}\ast\psi_{t})(x)\vert
        &=\left\vert t^{-n}\int_{\mathbb R^{n}}\int_{\mathbb R^{n}}a_{j}(y)\left( \psi\left(\frac{x-y}{t}\right)-P\left(\frac{x-x_{j}}{t}\right) \right)dy \right\vert\\
        &\leq C\vert x-x_{j}\vert^{-(n+d+1)}\chi_{(Q_{j}^{*})^{c}}\int_{Q_{j}}\vert a_{j}(y)\vert\vert y-x_{j}\vert^{d+1}dy\\
        &\leq C\vert x-x_{j}\vert^{-(n+d+1)}l(Q_{j})^{n+d+1}\|\chi_{Q_{j}}\|_{X}^{-1}\chi_{(Q_{j}^{*})^{c}}.
    \end{aligned}
    \]
    Hence, we finish the proof of the claim. Then by applying the $L^{q}$-boundedness of the maximal operator $\mathcal{M}$, we obtain that
    \[
    \begin{aligned}
        &\left( \frac{1}{\vert Q_{j}^{*}\vert}\int_{Q_{j}^{*}}\vert \mathcal{M}(a_{j})(x)\chi_{Q_{j}^{*}} \vert^{q} \right)^{\frac{1}{q}}
        \leq\frac{1}{\vert Q_{j}^{*}\vert^{\frac{1}{q}}}\| \mathcal{M}(a_{j}) \|_{L^{q}}\\
        &\leq \frac{C}{\vert Q_{j}^{*}\vert^{\frac{1}{q}}}\| a_{j} \|_{L^{q}}
        \leq \frac{C}{\|\chi_{Q_{j}}\|_{X}}.
    \end{aligned}
    \]
    Choose $\gamma>1$. Then by the Assumption~{\ref{ass2.7}} and Lemma~{\ref{le2.8}}, we conclude that
    \[
    \begin{aligned}
        &\left\|  \mathcal{G}_{N}^{0}\left( \sum_{j=1}^{\infty}\lambda_{j}a_{j} \right)  \right\|_{X}\\
        &\leq C\left\|  \sum_{j=1}^{\infty}\lambda_{j}\left( \frac{1}{\vert Q_{j}^{*}\vert}\int_{Q_{j}^{*}}\vert \mathcal{M}(a_{j})(x)\chi_{Q_{j}^{*}} \vert^{q} \right)^{\frac{1}{q}}\chi_{Q_{j}^{*}} \right\|_{X}+C\left\| \sum_{j=1}^{\infty}\frac{\lambda_{j}\chi_{(Q_{j}^{*})^{c}}}{\|\chi_{Q_{j}}\|_{X}}\left( \mathcal{M}\chi_{Q_{j}} \right)^{\frac{n+d+1}{n}}\right\|_{X}\\
        &\leq C\left\| \sum_{j=1}^{\infty}\frac{\lambda_{j}\chi_{Q_{j}^{*}}}{\|\chi_{Q_{j}}\|_{X}} \right\|_{X}\\
        &\leq C\left\| \left(  \sum_{j=1}^{\infty}\frac{\lambda_{j}\mathcal{M}^{\gamma}(\chi_{Q_{j}})}{\|\chi_{Q_{j}}\|_{X}} \right)^{\frac{1}{\gamma}} \right\|_{X^{\gamma}}^{\gamma}\\
        &\leq C\left\| \sum_{j=1}^{\infty}\frac{\lambda_{j}\chi_{Q_{j}}}{\|\chi_{Q_{j}}\|_{X}} \right\|_{X}.
    \end{aligned}
    \]
    When $q=\infty$, it is obvious that for any $x\in Q_{j}^{*}$,
    \[
    \mathcal{M}(a_{j})(x)\leq C\|a_{j}\|_{L^{\infty}}\leq\frac{C}{\|\chi_{Q_{j}}\|_{X}}.
    \]
    Repeating the similar argument as used above, we can obtain that
    \[
    \begin{aligned}
        &\left\|\mathcal{G}_{N}^{0}\left( \sum_{j=1}^{\infty}\lambda_{j}a_{j} \right)\right\|_{X}\\
       &\leq C\left\|  \sum_{j=1}^{\infty}\frac{\lambda_{j}\chi_{Q_{j}^{*}}}{\|\chi_{Q_{j}}\|_{X}} \right\|_{X}+C\left\| \sum_{j=1}^{\infty}\frac{\lambda_{j}\chi_{(Q_{j}^{*})^{c}}}{\|\chi_{Q_{j}}\|_{X}}\left( \mathcal{M}\chi_{Q_{j}} \right)^{\frac{n+d+1}{n}}\right\|_{X}\\
        &\leq C\left\| \sum_{j=1}^{\infty}\frac{\lambda_{j}\chi_{Q_{j}^{*}}}{\|\chi_{Q_{j}}\|_{X}} \right\|_{X}\\
        &\leq C\left\| \sum_{j=1}^{\infty}\frac{\lambda_{j}\chi_{Q_{j}}}{\|\chi_{Q_{j}}\|_{X}} \right\|_{X}.
    \end{aligned}
    \]\par
    When $\vert Q_{j} \vert\geq1$, let $\Bar{Q}_{j}=Q_{j}(x_{j},l(Q_{j})+2)$. We observe that
    \[
    \mathcal{G}_{N}^{0}\left( \sum_{j=1}^{\infty}\lambda_{j}a_{j} \right)(x)\leq C\sum_{j=1}^{\infty}\lambda_{j}\mathcal{M}(a_{j})(x)\chi_{\Bar{Q}_{j}}(x).
    \]
    Thus, by applying the similar but easier argument and the fact that ${\rm supp}(a_{j}\ast\psi_{t})\subset\Bar{Q}_{j}\subset100Q_{j}$, we can obtain that
    \[
    \begin{aligned}
        \left\|\mathcal{G}_{N}^{0}\left( \sum_{j=1}^{\infty}\lambda_{j}a_{j} \right)\right\|_{X}
        &\leq C\left\| \lambda_{j}\mathcal{M}(a_{j})\chi_{\Bar{Q}_{j}} \right\|_{X}\\
        &\leq C\left\| \sum_{j=1}^{\infty}\lambda_{j}\left( \frac{1}{\vert \Bar Q_{j}\vert}\int_{\Bar Q_{j}}\vert\mathcal{M}(a_j)\chi_{\Bar Q_{j}}\vert^{q} dx\right)^{\frac{1}{q}}\chi_{\Bar{Q}_{j}} \right\|_{X}\\
        &\leq C\left\| \sum_{j=1}^{\infty}\frac{\lambda_{j}\chi_{100Q_{j}}}{\|\chi_{Q_{j}}\|_{X}}  \right\|_{X}\\
        &\leq C\left\|\sum_{j=1}^{\infty}\frac{\lambda_{j}\chi_{Q_{j}}}{\|\chi_{Q_{j}}\|_{X}} \right\|_{X}.
    \end{aligned}
    \]
    When $q=\infty$, we can obtain the desired result similarly.\par
    In the end, we extend the result to the general case. Given countable collections of cubes $\{Q_{j}\}_{j=1}^{\infty}$, of non-negative coefficients $\{\lambda_{j}\}_{j=1}^{\infty}$ and of the local-$(X,q,d)$-atom $\{a_{j}\}_{j=1}^{\infty}$, notice that 
    \[
    \left\|  \sum_{j=1}^{\infty}\frac{\lambda_{j}\chi_{Q_{j}}}{\|\chi_{Q_{j}}\|_{X}} \right\|_{X}<\infty
    \]
    and that for $1\leq m\leq n<\infty$,
    \[
    \left\|\mathcal{G}_{N}^{0}\left( \sum_{j=m}^{n}\lambda_{j}a_{j} \right)\right\|_{X}\leq C\left\|\sum_{j=m}^{n}\frac{\lambda_{j}\chi_{Q_{j}}}{\|\chi_{Q_{j}}\|_{X}} \right\|_{X}.
    \]
    Therefore, the sequence $\{\sum_{j=1}^{n}\lambda_{j}a_{j}\}_{n=1}^{\infty}$ is Cauchy and converges in an element $f\in h_{X}(\mathbb R^{n})$. Moreover, we claim that $h_{X}$-norm is stronger than the topology of $\mathcal{S}^{\prime}(\mathbb R^{n})$. In fact, for any $\varphi\in\mathcal{ S}(\mathbb R^{n})$ and ${\rm supp}\varphi\subset B_{0}=B(0,1)$, we can get that
    \[
    \begin{aligned}
        \vert \langle f,\varphi\rangle \vert^{p}
        &=\vert f\ast\tilde \varphi(0)\vert^{p}\\
        &\leq C\inf_{y\in B_{0}}\mathcal{G}_{N}^{0}(f)(y)^{p}\\
        &\leq C\frac{1}{\vert B_{0}\vert}\int_{B_{0}}\mathcal{G}_{N}^{0}(f)(y)^{p}dy\\
        &\leq C_{B_{0}}\|\mathcal{G}_{N}^{0}(f)^{p}\|_{X^{\frac{1}{p}}}\\
        &\leq C_{B_{0}}\|\mathcal{G}_{N}^{0}(f)\|_{X}^{p},
    \end{aligned}
    \]
    where $\tilde\varphi(x)=\varphi(-x)$ and we choose $p<1$ such that $X^{\frac{1}{p}}$ is a ball Banach function space. Hence, the sequence $\{\lambda_{j}a_{j}\}_{j=1}^{\infty}$ also converge to $f$ in $\mathcal{S}^{\prime}(\mathbb R^{n})$. Furthermore,
    \[
    \|f\|_{h_{X}}\leq C\lim_{n\to\infty}\left\| \mathcal{G}_{N}^{0}\left( \sum_{j=1}^{n}\lambda_{j}a_{j} \right) \right\|_{X}\leq C\left\| \sum_{j=1}^{\infty}\frac{\lambda_{j}\chi_{Q_{j}}}{\|\chi_{Q_{j}}\|_{X}} \right\|_{X}.
    \]
\end{proof}
Next we will prove the atomic decomposition. Since its proof is similar to that of \cite[Theorem 4.3]{tan2023real}, we only sketch some important steps here. For any $s\in\mathbb Z_{+}$, we use $\mathcal{P}_{s}(\mathbb R^{n})$ to denote the set of all the polynomials on $\mathbb R^{n}$ with total degree not greater than s;
\begin{proof}[{\bf Proof of Theorem~{\upshape\ref{th4.2}}}]
    First assume that $f\in h_{X}(\mathbb R^{n})\cap L^{2}(\mathbb R^{n})$. For each $k\in\mathbb Z$, set $\Omega_{k}=\{x\in\mathbb R^{n}\colon\mathcal{G}_{N}(f)(x)>2^{k}\}$. Then it follows that $\Omega_{k+1}\subset\Omega_{k}$. By \cite[Lemma 4.5]{tan2023real}, $f$ admits a Calder\'on-Zygmund decomposition of degree $d$ and height $2^{k}$ associated with $\mathcal{G}_{N}(f)$,
\[
f=g^{k}+\sum_{i}b^{k}_{i},\quad {\rm in}\ \mathcal{S}^{\prime}(\mathbb R^{n}),
\]
where $b^{k}_{i}=(f-P^{k}_{i})\eta_{i}^{k}$ if $l^{k}_{i}<1$ and $b^{k}_{i}=f\eta^{k}_{i}$ if $l^{k}_{i}\geq1$. We claim that $g^{k}\to f$ in both $h_{X}(\mathbb R^{n})$ and $\mathcal{S}^{\prime}(\mathbb R^{n})$ as $k\to\infty$. In fact, by \cite[Lemma 4.5]{tan2023real}, we have
\[
\begin{aligned}
    \|f-g^{k}\|^{p}_{h_{X}}
    &\leq\left\|  \sum_{i}\mathcal{G}_{N}^{0}b^{k}_{i}  \right\|^{p}_{X}\\
    &\leq\left\|  \sum_{i}\chi_{Q_{i,k}^{*}}\mathcal{G}_{N}f  \right\|^{p}_{X}+C\left\|  \sum_{i}\frac{2^{k}(l^{k}_{i})^{n+d+1}\chi_{(Q^{*}_{i,k})^{c}}}{(l_{i}^{k}+|\cdot-x_{i}|)^{n+d+1}}  \right\|^{p}_{X}\\
    &\leq \left\|  \sum_{i}\chi_{Q_{i,k}^{*}}\mathcal{G}_{N}f  \right\|^{p}_{X}+C\left\|  \sum_{i}2^{k}(\mathcal{M}\chi_{Q^{*}_{i,k}})^{\frac{n+d+1}{n}}  \right\|^{p}_{X}\\
    &\leq C\left\|  \chi_{\Omega_{k}}\mathcal{G}_{N}f  \right\|^{p}_{X}+C\left\|  \sum_{i}2^{k}\chi_{Q^{*}_{i,k}}  \right\|^{p}_{X}\\
    &\leq C\left\|  \chi_{\Omega_{k}}\mathcal{G}_{N}f  \right\|^{p}_{X}+C\left\|  2^{k}\chi_{\Omega_{k}}  \right\|^{p}_{X}\\
    &\leq C\left\|  \chi_{\Omega_{k}}\mathcal{G}_{N}f  \right\|^{p}_{X}.
\end{aligned}
\]
Thus, we obtain that 
\[
\|f-g^{k}\|_{h_X}=\left\|\sum_{i}b^{k}_{i}\right\|_{h_X}\to 0
\]
as $k\to\infty$. Notice that $\|g^{k}\|_{L^{\infty}}\leq C2^{k}$. It follows that $g^{k}\to 0$ uniformly as $k\to\infty$.
Therefore,
\[
f=\sum_{k=-\infty}^{\infty}(g^{k+1}-g^{k})
\]
in $\mathcal{S}^{\prime}(\mathbb R^{n})$ and almost everywhere. In fact, since ${\rm supp}(\sum_{i}b^{k}_{i})\subset\Omega_{k}$, then $g^{k}\to f$ almost everywhere as $k\to\infty$. Then, by applying \cite[Lemma 4.6, Lemma 4.7 and Lemma 4.8]{tan2023real}, we can obtain the desired decomposition as follows:
\[
\begin{aligned}
    g^{k+1}-g^{k}
   & =\left( f-\sum_{j}b^{k+1}_{j}  \right)-\left( 
f-\sum_{i}b^{k}_{i}  \right)\\
   &\equiv\sum_{i}h^{k}_{i},
\end{aligned}
\]
where the series converges both in $\mathcal{S}^{\prime}(\mathbb R^{n})$ and almost everywhere. By repeating the similar argument as in \cite[Theorem 4.3]{tan2023real}, we can categorize $h_{i}^{k}$ into three cases and obtain that ${\rm supp}(h^{k}_{i})\subset\tilde Q^{k}_{i}\subset\Omega_{k}$ and in case 3, $h^{k}_{i}$ satisfies the moment conditions $\int_{\mathbb R^{n}}h^{k}_{i}(x)q(x)dx=0$ for any $q\in\mathcal{P}_{s}(\mathbb R^{n})$ where $\tilde Q^{k}_{i}$ can be chosen in \cite{tan2023real}.\par
Let $\lambda_{i,k}=C2^{k}\left\|\chi_{\tilde Q^{k}_{i}}\right\|_{X}$ and $a_{i,k}=\frac{h^{k}_{i}}{\lambda_{i,k}}$. Then it follows that each $a_{i,k}$ satisfies ${\rm supp}(a_{i,k})\subset\tilde Q^{k}_{i}$, $\|a_{i,k}\|_{L^{q}}\leq\frac{|\tilde Q^{k}_{i}|^{\frac{1}{q}}}{\left\|\chi_{\tilde Q^{k}_{i}}\right\|_{X}}$ and the moment conditions for small cubes with
\[
f=\sum_{i,k}\lambda_{i,k}a_{i,k}.
\]
For convenience, we rearrange $\{a_{i,k}\}$ and $\{\lambda_{i,k}\}$ as follows:
\[
f=\sum_{i,k}\lambda_{i,k}a_{i,k}\equiv\sum_{j=1}^{\infty}\lambda_{j}a_{j}.
\]\par
Then, we need to prove that for any fixed $\eta\in(0,\infty)$,
\[
\left\| \left(\sum_{j=1}^{\infty}\left(\frac{\lambda_{j}\chi_{Q_{j}}}{\|\chi_{Q_{j}}\|_{X}}\right)^{\eta}\right)^{\frac{1}{\eta}}   \right\|_{X}\leq C_{\eta}\|f\|_{h_{X}}.
\]
Observe that
\[
\left\| \left(\sum_{k\in\mathbb Z}\sum_{i\in\mathbb N}\left(\frac{\lambda_{i,k}\chi_{\tilde Q_{i}^{k}}}{\|\chi_{\tilde Q^{k}_{i}}\|_{X}}\right)^{\eta}\right)^{\frac{1}{\eta}}   \right\|_{X}\leq C\left\| \left(\sum_{k\in\mathbb Z}(2^{k}\chi_{\Omega_{k}})^{\eta}\right)^{\frac{1}{\eta}}   \right\|_{X}.
\]
Notice that $\Omega_{k+1}\subset\Omega_{k}$ and $|\bigcap_{k=1}^{\infty}\Omega_{k}|=0$. Then for almost everywhere $x\in\mathbb R^{n}$, we have
\[
\sum_{k=-\infty}^{\infty}(2^{k}\chi_{\Omega_{k}}(x))^{\eta}=\sum_{k=-\infty}^{\infty}2^{k\eta}\sum_{j=k}^{\infty}\chi_{\Omega_{j}\backslash\Omega_{j+1}}(x)=(1-2^{-\eta})^{-1}\sum_{j=-\infty}^{\infty}2^{js}\chi_{\Omega_{j}\backslash\Omega_{j+1}}(x).
\]
Therefore, by the definition of $\Omega_{k}$, we can obtain that
\[
\begin{aligned}
&\left\| \left(\sum_{k=-\infty}^{\infty}(2^{k}\chi_{\Omega_{k}})^{\eta}\right)^{\frac{1}{\eta}}   \right\|_{X}\\
&\leq C_{\eta}\left\| \left(\sum_{j=-\infty}^{\infty}(2^{j}\chi_{\Omega_{j}\backslash\Omega_{j+1}})^{\eta}\right)^{\frac{1}{\eta}}   \right\|_{X}\\
&\leq C_{\eta}\left\| \left(\sum_{j=-\infty}^{\infty}(\mathcal{G}_{N}(f)\chi_{\Omega_{j}\backslash\Omega_{j+1}})^{\eta}\right)^{\frac{1}{\eta}}   \right\|_{X}\\
&\leq C_{\eta}\left\| \mathcal{G}_{N}(f)\sum_{j=-\infty}^{\infty}\chi_{\Omega_{j}\backslash\Omega_{j+1}}   \right\|_{X}\\
&\leq C_{\eta}\left\| \mathcal{G}_{N}(f)   \right\|_{X}.
\end{aligned}
\]
Since $h_{X}(\mathbb R^{n})\cap L^{2}(\mathbb R^{n})$ is dense in $h_{X}(\mathbb R^{n})$, we can prove that any $f\in h_{X}(\mathbb R^{n})$ can be decomposed as in the theorem. Therefore, we complete the proof of the theorem.
\end{proof}

Then, we will discuss the finite decomposition on $h_{X}(\mathbb R^{n})$. Since its proof is similar to that of \cite[Theorem 4.9]{tan2023real}, we only sketch some important steps here.
\begin{definition}\label{def4.4}
    Let $X$ be a ball quasi-Banach function space satisfying Assumption~{\ref{ass2.7}}. Let $p_{0},\ q_{0}$ and $\tau$ be as in Lemma~\ref{le2.8}. Assume that $X^{1/p_{0}}$ is a ball Banach function space and the Hardy--Littlewood maximal operator $\mathcal{M}$ is bounded on $[(X^{1/p_{0}})^{\prime}]^{1/\tau^{\prime}}$. Let $1<q\leq\infty$ and $d\in\mathbb Z_{+}$. The \emph{finite atomic local Hardy space} $h_{fin}^{X,q,d}(\mathbb R^{n})$ associated with $X$, is defined by
    \[
    h_{fin}^{X,q,d}(\mathbb R^{n})=\left\{ f\in\mathcal{S}^{\prime}(\mathbb R^{n})\colon f=\sum_{j=1}^{M}\lambda_{j}a_{j} \right\},
    \]
    where $\{a_{j}\}_{j=1}^{M}$ are local-$(X,q,d)$-atoms satisfying
    \[
    \left\| \sum_{j=1}^{M} \frac{\lambda_{j}\chi_{Q_{j}}}{\|\chi_{Q_{j}}\|_{X}}   \right\|_{X}<\infty.
    \]
    Furthermore, the quasi-norm $\|\cdot\|_{h_{fin}^{X,q,d}}$ in $h_{fin}^{X,q,d}(\mathbb R^{n})$ is defined by setting, for any $f\in h_{fin}^{X,q,d}(\mathbb R^{n})$,
    \[
    \|f\|_{h_{fin}^{X,q,d}}:=\inf\left\{\left\| \sum_{j=1}^{M} \frac{\lambda_{j}\chi_{Q_{j}}}{\|\chi_{Q_{j}}\|_{X}}   \right\|_{X}\right\},
    \]
    where the infimum is taken over all finite decomposition of $f$.
\end{definition}
From Corollary~\ref{cor4.3}, we know that $h_{fin}^{X,q,d}(\mathbb R^{n})$ is dense in $h_{X}(\mathbb R^{n})$.
\begin{theorem}\label{th4.5}
    Let $X$ be a ball quasi-Banach function space satisfying Assumption~{\ref{ass2.7}}. Let $p_{0},\ q_{0}$ and $\tau$ be as in Lemma~\ref{le2.8}. Assume that $X^{1/p_{0}}$ is a ball Banach function space and the Hardy--Littlewood maximal operator $\mathcal{M}$ is bounded on $[(X^{1/p_{0}})^{\prime}]^{1/\tau^{\prime}}$.
    Further assume that $X$ has an absolutely continuous quasi-norm. Fix $q\in(1,\infty)$ and $d\in\mathbb{Z}_{+}$. For $f\in h_{fin}^{X,q,d}(\mathbb R^{n})$,
    \[
    \|f\|_{h_{fin}^{X,q,d}}\sim\|f\|_{h_{X}}.
    \]
\end{theorem}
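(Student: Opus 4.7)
The forward direction $\|f\|_{h_X} \lesssim \|f\|_{h_{fin}^{X,q,d}}$ is immediate: given any finite decomposition $f = \sum_{j=1}^{M}\lambda_j a_j$, Theorem~\ref{th4.1} gives $\|f\|_{h_X} \leq C\|\sum_{j=1}^{M}\lambda_j \chi_{Q_j}/\|\chi_{Q_j}\|_X\|_X$, and taking the infimum over finite decompositions of $f$ yields the bound.

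For the reverse direction, note that $f \in h_{fin}^{X,q,d}(\mathbb R^n)$ is a finite sum of $L^q$-atoms with compact supports, so $f \in L^q(\mathbb R^n)\cap L^{\infty}(\mathbb R^n)$ with support in some cube $Q_0$. Because every $\psi \in \mathcal{S}_{N}(\mathbb R^n)$ has compact support, for $t \in (0,1)$ the convolution $\psi_t \ast f$ is supported in a fixed enlargement $Q_{\ast}$ of $Q_0$, so $\mathcal{G}_{N}(f) \in L^{\infty}(\mathbb R^n)$ with support in $Q_{\ast}$; consequently the level sets $\Omega_k := \{\mathcal{G}_{N}(f) > 2^k\}$ satisfy $\Omega_k \subset Q_{\ast}$ and $\Omega_k = \emptyset$ for every $k > k_0 := \lceil\log_2\|\mathcal{G}_{N}(f)\|_{L^{\infty}}\rceil$. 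The plan is to start from the Calder\'on--Zygmund-based decomposition constructed in the proof of Theorem~\ref{th4.2}, namely $f = \sum_{k \leq k_0}\sum_i \lambda_{i,k} a_{i,k}$ with each atom $a_{i,k}$ supported in $\tilde Q^k_i \subset \Omega_k \subset Q_{\ast}$, and then to truncate this infinite sum to a finite one while retaining control of the atomic quasi-norm.

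Concretely, fix a lower level $k_1 \ll 0$ and, for each $k_1 \leq k \leq k_0$, a finite index set $I_k$; split $f = P + R$ with the finite atomic piece $P = \sum_{k=k_1}^{k_0}\sum_{i\in I_k}\lambda_{i,k}a_{i,k}$. The portion of $R$ with $k < k_1$ telescopes to the good part $g^{k_1}$ of the Calder\'on--Zygmund decomposition at level $2^{k_1}$, which is bounded by $C\,2^{k_1}$ and supported in $Q_{\ast}$; the portion from $i \notin I_k$ for $k_1 \leq k \leq k_0$ can be made arbitrarily small in $L^q$-norm and in the $X$-atomic quasi-norm by enlarging $I_k$, using the absolute continuity of $\|\cdot\|_X$. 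After enlarging $Q_{\ast}$ so that $|Q_{\ast}| \geq 1$ (removing the need for vanishing-moment conditions on the remainder), $R$ becomes a scalar multiple $\lambda_R a_R$ of a single local-$(X,q,d)$-atom supported in $Q_{\ast}$, with $\lambda_R \to 0$ as $k_1 \to -\infty$ and $|I_k| \to \infty$. Combining the atomic quasi-norm of $P$ (controlled by Theorem~\ref{th4.2}) with the vanishing contribution $\lambda_R\,\chi_{Q_{\ast}}/\|\chi_{Q_{\ast}}\|_X$ yields $\|f\|_{h_{fin}^{X,q,d}} \lesssim \|f\|_{h_X}$. The main obstacle is the simultaneous control of the $k$-tail (via the telescoping identity and the $L^{\infty}$-bound on $g^{k_1}$) and the $i$-tail (via the absolute continuity of $\|\cdot\|_X$) so that $R$ is genuinely a single atom with vanishing coefficient; this parallels \cite[Theorem~4.9]{tan2023real}, with the ball quasi-Banach structure of $X$ entering only through the absolute continuity hypothesis and the quasi-triangle estimate implicit in Theorem~\ref{th4.2}.
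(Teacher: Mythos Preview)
Your approach is essentially the paper's: truncate the infinite decomposition from Theorem~\ref{th4.2} to a finite piece $P$ and absorb the remainder $R=f-P$, which is small in $L^q$ and supported in a fixed large cube, into one or finitely many local atoms with controllable coefficients. The only cosmetic differences are that the paper truncates via a single index set $F_N=\{(i,k):|k|+i\leq N\}$ and partitions the big cube $\hat Q_0$ into unit-scale subcubes to represent $R$, whereas you split the truncation in the $k$- and $i$-directions separately and use one large cube; note also that the $L^q$-smallness of the $i$-tail follows from dominated convergence (the partial sums are pointwise dominated by $C\,\mathcal{G}_N f\in L^q$, as in the paper's argument), not from the absolute continuity of $\|\cdot\|_X$, which is not needed at that particular step.
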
\par
\begin{remark}\label{re4.6}
    From \cite[Theorem 1.10]{yan2020intrinsic} and Theorem~\ref{th4.5}, we obtain the following conclusion:\par
    Let $X,q,d$ be as Theorem~\ref{th4.5}.
    \begin{enumerate}[(i)]
        \item If $q\in(1,\infty)$, then $\|\cdot\|_{h^{X,q,d}_{fin}}$ and $\|\cdot\|_{h_{X}}$ are equivalent quasi-norms on the space $h^{X,q,d}_{fin}(\mathbb R^{n})$.
        \item If $q=\infty$, then $\|\cdot\|_{h^{X,\infty,d}_{fin}}$ and $\|\cdot\|_{h_{X}}$ are equivalent quasi-norms on the space $h^{X,q,d}_{fin}(\mathbb R^{n})\cap\mathscr{C}(\mathbb R^{n})$, where $\mathscr{C}(\mathbb R^{n})$ is defined to be the set of all continuous complex-valued functions on $\mathbb R^{n}$.
    \end{enumerate}
\end{remark}
\begin{proof}[{\bf Proof of Theorem~{\upshape\ref{th4.5}}}]
    First, it is obvious that for $f\in h^{X,q,d}_{fin}(\mathbb R^{n})$
\[
\|f\|_{h_{X}}\leq C\|f\|_{h^{X,q,d}_{fin}}.
\]
Then we only need to prove that for any $f\in h^{X,q,d}_{fin}(\mathbb R^{n})$,
\[
\|f\|_{h^{X,q,d}_{fin}}\leq C\|f\|_{h_{X}}.
\]
By homogeneity, we can assume that $\|f\|_{h_{X}}=1$. Thus, it suffices to show that $\|f\|_{h^{X,q,d}_{fin}}\leq C$. By Theorem~{\ref{th4.2}}, since $f\in h_{X}(\mathbb R^{n})\cap L^{q}(\mathbb R^{n})$, we can form the following decomposition of $f$ in terms of the local-$(X,q,d)$-atoms:
\[
f=\sum_{i,k}\lambda_{i,k}a_{i,k},
\]
where the series converges almost everywhere and in $\mathcal{S}^{\prime}(\mathbb R^{n})$. If $f\in h^{X,q,d}_{fin}(\mathbb R^{n})$, then ${\rm supp}f\subset Q(x_{0},R_{0})$ for fixed $x_{0}\in\mathbb R^{n}$ and some $R_{0}\in(1,\infty)$. Set $\hat{Q}_{0}=Q(x_{0},\sqrt{n}R_{0}+2^{3(n+10)+1})$. Then for any $\psi\in\mathcal{S}_{N,2^{3(n+10)}}$ and $x\in(\hat{Q}_{0})^{c}$, for $0<t<1$, we obtain that
\[
\psi_{t}\ast f(x)=\int_{Q(x_{0},R_{0})}\psi_{t}(x-y)f(y)dy=0.
\]
Thus, for any $x\in(\hat{Q}_{0})^{c}$, it follows that $x\in(\Omega_{k})^{c}$. Hence, $\Omega_{k}\subset\hat{Q}_{0}$ and ${\rm supp}\sum_{i,k}\lambda_{i,k}a_{i,k}\subset\hat{Q}_{0}$. Next we claim that $\sum_{i,k}\lambda_{i,k}a_{i,k}$ converges to $f$ in $L^{q}(\mathbb R^{n})$. In fact, for any $x\in\mathbb R^{n}$, we can find a $j\in\mathbb Z$ such that $x\in\Omega_{j}\backslash\Omega_{j+1}$. Since ${\rm supp}a_{i,k}\subset\tilde Q_{i}^{k}\subset\Omega_{k}\subset\Omega_{j+1}$ for all $k>j$, then we obtain that
\[
\left| \sum_{k\in\mathbb Z}\sum_{i\in\mathbb N}\lambda_{i,k}a_{i,k}  \right|\leq\sum_{k\in\mathbb Z}\sum_{i\in\mathbb N}\left| \lambda_{i,k}a_{i,k}  \right|\leq C\sum_{k\leq j}2^{k}\leq C2^{j}\leq C\mathcal{G}_{N}f.
\]
By applying the fact that $\mathcal{G}_{N}f\in L^{q}(\mathbb R^{n})$ and the Lebesgue dominated convergence theorem, we completed the proof of the claim. For each integer $N>0$, we write
\[
F_{N}=\{(i,k)\colon k\in\mathbb Z, i\in\mathbb N, |k|+i\leq N\}.
\]
Then $f_{N}\equiv\sum_{(i,k)\in F_{N}}\lambda_{i,k}a_{i,k}$ is a finite combination of the local-$(X,q,d)$-atoms with
\[
\|f_{N}\|_{h^{X,q,d}_{fin}}\leq\left\|  \sum_{(i,k)\in F_{N}}\frac{\lambda_{i,k}\chi_{\tilde Q_{i}^{k}}}{\|\chi_{\tilde Q_{i}^{k}}\|_{X}}  \right\|_{X}\leq C\|f\|_{h_{X}}\leq C.
\]
Since the series $\sum_{i,k}\lambda_{i,k}a_{i,k}$ converges absolutely in $L^{q}(\mathbb R^{n})$, for any given $\epsilon\in(0,\infty)$, there exists $N$ such that $\|f-f_{N}\|_{L^{q}}<\frac{\epsilon|\hat{Q}_{0}|^{1/q}}{\left\|\chi_{\hat{Q}_{0}}\right\|_{X}}$. Meanwhile, ${\rm supp}f_{N}\subset\hat{Q}_{0}$ and the support of $f$ imply that ${\rm supp}(f-f_{N})\subset\hat{Q}_{0}$. Therefore, we can divide $\hat{Q}_{0}$ into the union of cube of $\{Q_{i}\}_{i=1}^{N_{0}}$ with disjoint interior and sidelengths satisfying $l_{i}\in[1,2)$, where $N_{0}$ depends only on $R_{0}$ and $n$. In particular, we know that $g_{N,i}\equiv\frac{(f-f_{N})\chi_{Q_{i}}}{\epsilon}$ is a local-$(X,q,d)$-atom. Therefore,
\[
\begin{aligned}
\|f-f_{N}\|_{h^{X,q,d}_{fin}}
&=\left\| \sum_{i=1}^{N_{0}}\epsilon g_{N,i} \right\|_{h^{X,q,d}_{fin}}\\
&\leq C\left\|  \sum_{i=1}^{N_{0}}\frac{\epsilon\chi_{Q_{i}}}{\|\chi_{Q_{i}}\|_{X}} \right\|_{X}\\
&\leq C\sum_{i=1}^{N_{0}}\left\|  \frac{\epsilon\chi_{Q_{i}}}{\|\chi_{Q_{i}}\|_{X}} \right\|_{X}\\
&\leq C.
\end{aligned}
\]
Thus, we conclude that
\[
f=\sum_{(i,k)\in F_{N}}\lambda_{i,k}a_{i,k}+\sum_{i=1}^{N_{0}}\epsilon g_{N,i}
\]
is the desired finite atomic decomposition. Therefore, we finish the proof of Theorem~{\ref{th4.5}}.
\end{proof}

\section{Dual spaces}\label{se5}
As an application of the above atomic decompositions, we will give the dual space of $h_{X}(\mathbb R^{n})$. First, we introduce the local ball Campanato-type function space. For any bounded measurable set $E$ with positive measure and any locally integrable function $f$ on $\mathbb R^{n}$, $P_{E}^{(s)}(f)$ denotes the \emph{minimizing polynomial} of $f$ with total degree not greater than $s$, which means that $P_{E}^{(s)}(f)$ is the unique polynomial in $\mathcal{P}_{s}(\mathbb R^{n})$ such that, for any $P\in\mathcal{P}_{s}(\mathbb R^{n})$,
\[
\int_{E}\left[ f(x)-P_{E}^{(s)}(f)(x) \right]P(x)dx=0.
\]
\begin{definition}\label{def5.1}
    Let $X$ be a ball quasi-Banach function space. Let $1<q<\infty$ and $d\in\mathbb Z_{+}$. The \emph{local ball Campanato-type function space} $\widetilde{bmo}^{X,q,d}(\mathbb R^{n})$, associated with $X$, is defined to be the set of all $f\in L^{q}_{loc}(\mathbb R^{n})$ such that
    \[
    \begin{aligned}
    \|f\|_{\widetilde{bmo}^{X,q,d}}=
    &\sup_{\vert Q\vert\geq1}\left\|  \sum_{i=1}^{M}\frac{\lambda_{i}\chi_{Q_{i}}}{\|\chi_{Q_{i}}\|_{X}} \right\|_{X}^{-1}\sum_{i=1}^{M}\left\{ \frac{\lambda_{i}\vert Q_{i}\vert}{\|\chi_{Q_{i}}\|_{X}}\left( \frac{1}{\vert Q_{i}\vert}\int_{Q_{i}}\vert f(x)\vert^{q}dx \right)^{\frac{1}{q}} \right\}\\
    &+\sup_{\vert Q\vert<1}\left\| \sum_{i=1}^{M}\frac{\lambda_{i}\chi_{Q_{i}}}{\|\chi_{Q_{i}}\|_{X}} \right\|_{X}^{-1}\sum_{i=1}^{M}\left\{\frac{\lambda_{i}\vert Q_{i}\vert}{\|\chi_{Q_{i}}\|_{X}}\right.\\
    &\left.\times\left( \frac{1}{\vert Q_{i}\vert}\int_{Q_{i}}\vert f(x)-P^{(d)}_{Q_{i}}f(x) \vert^{q}dx \right)^{\frac{1}{q}}\right\}.
    \end{aligned}
    \]
\end{definition}
\begin{theorem}\label{th5.2}
    Let $X$ be a ball quasi-Banach function space satisfying Assumption~{\ref{ass2.7}}. Let $p_{0},\ q_{0}$ and $\tau$ be as in Lemma~\ref{le2.8}. Assume that $X^{1/p_{0}}$ is a ball Banach function space and the Hardy--Littlewood maximal operator $\mathcal{M}$ is bounded on $[(X^{1/p_{0}})^{\prime}]^{1/\tau^{\prime}}$.
    Further assume that $X$ has an absolutely continuous quasi-norm. Let $1<q<\infty$ and $d\in\mathbb Z_{+}$. The dual space of $h_{X}(\mathbb R^{n})$, denoted by $\left(h_{X}(\mathbb R^{n})\right)^{*}$, is $\widetilde{bmo}^{X,q^{\prime},d}(\mathbb R^{n})$ with $\frac{1}{q}+\frac{1}{q^{\prime}}=1$ in the following sense:
    \begin{enumerate}
        \item Let $g\in\widetilde{bmo}^{X,q^{\prime},d}(\mathbb R^{n})$. Then the linear functional 
        \[
        L_{g}\colon f\to L_{g}(f):=\int_{\mathbb R^{n}}f(x)g(x)dx,
        \]
        initially defined by any $f\in h_{fin}^{X,q,d}(\mathbb R^{n})$, has a bounded extension to $h_{X}(\mathbb R^{n})$.
        \item Conversely, for any $L\in(h_{X}(\mathbb R^{n}))^{*}$, there exists a unique function $g\in\widetilde{bmo}^{X,q^{\prime},d}$ such that $L(f)=\langle g,f\rangle$ holds for any $f\in h^{X,q,d}_{fin}(\mathbb R^{n})$ with $\|g\|_{\widetilde{bmo}^{X,q^{\prime},d}}\leq C\|L\|$.
    \end{enumerate}
\end{theorem}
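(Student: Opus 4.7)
The plan is to prove both directions via the finite atomic decomposition of Theorem~\ref{th4.5}, working atom by atom and then summing, modelled on the classical $H^{1}$--BMO duality and its local $h_{X}$-analogues.

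For assertion (1), let $g\in\widetilde{bmo}^{X,q',d}(\mathbb{R}^{n})$ and $f=\sum_{j=1}^{M}\lambda_{j}a_{j}\in h_{fin}^{X,q,d}(\mathbb{R}^{n})$ with each $a_{j}$ supported in a cube $Q_{j}$. I will bound $|L_{g}(f)|\leq\sum_{j}\lambda_{j}|\int a_{j}g\,dx|$ by separating the indices according to whether $|Q_{j}|\geq 1$ or $|Q_{j}|<1$. In the first case, Hölder's inequality together with the $L^{q}$ size condition of Definition~\ref{def2.6} yields
\[
\left|\int_{\mathbb{R}^{n}}a_{j}(x)g(x)\,dx\right|\leq\frac{|Q_{j}|}{\|\chi_{Q_{j}}\|_{X}}\left(\frac{1}{|Q_{j}|}\int_{Q_{j}}|g|^{q'}\,dx\right)^{1/q'}.
\]
In the second case, the moment condition on $a_{j}$ allows me to subtract $P_{Q_{j}}^{(d)}g$ inside the integral before applying Hölder, producing the analogous estimate with $g$ replaced by $g-P_{Q_{j}}^{(d)}g$. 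Splitting the sum accordingly, applying the two suprema in Definition~\ref{def5.1} to the two subcollections, and using axiom (ii) of Definition~\ref{def2.1}, I obtain
\[
|L_{g}(f)|\leq C\|g\|_{\widetilde{bmo}^{X,q',d}}\left\|\sum_{j=1}^{M}\frac{\lambda_{j}\chi_{Q_{j}}}{\|\chi_{Q_{j}}\|_{X}}\right\|_{X}.
\]
Taking the infimum over finite atomic decompositions and invoking Theorem~\ref{th4.5} gives $|L_{g}(f)|\leq C\|g\|_{\widetilde{bmo}^{X,q',d}}\|f\|_{h_{X}}$, and density of $h_{fin}^{X,q,d}(\mathbb{R}^{n})$ in $h_{X}(\mathbb{R}^{n})$ extends $L_{g}$ by continuity.

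For assertion (2), given $L\in(h_{X}(\mathbb{R}^{n}))^{*}$, I construct $g$ locally and patch. For any cube $Q$, let $L_{0}^{q}(Q)$ denote $\{h\in L^{q}(Q):\int h(x)P(x)\,dx=0\text{ for all }P\in\mathcal{P}_{d}(\mathbb{R}^{n})\}$ when $|Q|<1$, and the whole $L^{q}(Q)$ when $|Q|\geq 1$. Any nonzero $h\in L_{0}^{q}(Q)$ rescales to a local-$(X,q,d)$-atom, whence $|L(h)|\leq\|L\|\,\|\chi_{Q}\|_{X}|Q|^{-1/q}\|h\|_{L^{q}(Q)}$. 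The Riesz representation theorem then produces $g_{Q}\in L^{q'}(Q)$, unique modulo $\mathcal{P}_{d}$ when $|Q|<1$, satisfying $L(h)=\int h\,g_{Q}\,dx$ for all $h\in L_{0}^{q}(Q)$. On a nested pair $Q\subset Q'$ the restrictions to $L_{0}^{q}(Q)$ agree, so $g_{Q'}|_{Q}-g_{Q}\in\mathcal{P}_{d}$; exhausting $\mathbb{R}^{n}$ by an increasing sequence of cubes and correcting each successive $g_{Q_{k}}$ by a polynomial in $\mathcal{P}_{d}$ produces a global $g\in L_{loc}^{q'}(\mathbb{R}^{n})$, well-defined modulo one fixed polynomial, that represents $L$ on $h_{fin}^{X,q,d}(\mathbb{R}^{n})$.

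To verify $\|g\|_{\widetilde{bmo}^{X,q',d}}\leq C\|L\|$, for any finite collection $\{(\lambda_{i},Q_{i})\}_{i=1}^{M}$ I pick, via $L^{q}$--$L^{q'}$ duality on each $Q_{i}$, functions $h_{i}\in L_{0}^{q}(Q_{i})$ with $\|h_{i}\|_{L^{q}}=1$ attaining (up to $\varepsilon$) the quantity $(|Q_{i}|^{-1}\int_{Q_{i}}|g-P_{Q_{i}}^{(d)}g|^{q'}\,dx)^{1/q'}$ in the small-cube case and its polynomial-free analogue in the large-cube case. Rescaling $a_{i}:=|Q_{i}|^{1/q}h_{i}/\|\chi_{Q_{i}}\|_{X}$ to local-$(X,q,d)$-atoms and choosing coefficients $\mu_{i}$ so that $\mu_{i}a_{i}=\lambda_{i}|Q_{i}|h_{i}/\|\chi_{Q_{i}}\|_{X}^{2}\cdot|Q_{i}|^{-1/q}\|\chi_{Q_{i}}\|_{X}$, the sum that appears in Definition~\ref{def5.1} is (up to $\varepsilon$) exactly $L\bigl(\sum_{i}\mu_{i}a_{i}\bigr)$, which is bounded by $\|L\|\cdot\bigl\|\sum_{i}\mu_{i}a_{i}\bigr\|_{h_{X}}$ and, by the reconstruction Theorem~\ref{th4.1}, by $C\|L\|\,\bigl\|\sum_{i}\lambda_{i}\chi_{Q_{i}}/\|\chi_{Q_{i}}\|_{X}\bigr\|_{X}$. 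Dividing and letting $\varepsilon\to 0$ yields the required bound. The principal obstacle is the global patching in part (2): reconciling the polynomial ambiguity of the local representatives $g_{Q}$ across the exhaustion so that a single function, modulo one fixed element of $\mathcal{P}_{d}$, represents $L$ simultaneously on every $L_{0}^{q}(Q)$; this is where the two-part structure (large versus small cubes) in Definition~\ref{def5.1} is essential.
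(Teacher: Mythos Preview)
Your argument for part~(1) is essentially the paper's: split the atoms by cube size, use the moment condition on small cubes to insert $P_{Q_j}^{(d)}g$, apply H\"older, and finish with Definition~\ref{def5.1} and Theorem~\ref{th4.5}.

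For part~(2), however, your construction of $g$ carries an unnecessary complication that becomes a genuine gap. The paper does \emph{not} work with $L_{0}^{q}(Q)$ for arbitrary cubes. It exhausts $\mathbb{R}^{n}$ by an increasing sequence of cubes $P_{j}$ \emph{all of side length at least~$1$}. Because local-$(X,q,d)$-atoms supported in large cubes carry no moment condition, every $f\in L^{q}(P_{j})$ normalises to an atom, so $L$ restricts to a bounded functional on the whole of $L^{q}(P_{j})$ and the Riesz representative $g^{P_{j}}\in L^{q'}(P_{j})$ is \emph{unique}, not merely unique modulo $\mathcal{P}_{d}$. On $P_{j}\subset P_{j+1}$ one gets $g^{P_{j+1}}|_{P_{j}}=g^{P_{j}}$ exactly, and the patching is trivial.

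This is not just a simplification: your conclusion that $g$ is ``well-defined modulo one fixed polynomial'' is incompatible with Definition~\ref{def5.1}. The large-cube half of $\|\cdot\|_{\widetilde{bmo}^{X,q',d}}$ involves $\bigl(\tfrac{1}{|Q_{i}|}\int_{Q_{i}}|f|^{q'}\bigr)^{1/q'}$ with no polynomial subtracted, so the norm is \emph{not} invariant under $g\mapsto g+P$ for $P\in\mathcal{P}_{d}$. A representative defined only up to a polynomial would not have a well-defined $\widetilde{bmo}$ norm, and the inequality $\|g\|_{\widetilde{bmo}^{X,q',d}}\leq C\|L\|$ would be meaningless. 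Restricting the exhaustion to large cubes is exactly what pins $g$ down uniquely and makes the statement of~(2) sensible.

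Your norm-bound step at the end is the paper's idea (choose near-extremal $h_{i}$, normalise to atoms, apply Theorem~\ref{th4.1}), but the coefficient formula you wrote is garbled: once $a_{i}=|Q_{i}|^{1/q}h_{i}/\|\chi_{Q_{i}}\|_{X}$ with $\|h_{i}\|_{L^{q}(Q_{i})}=1$, one has $\int a_{i}g=\tfrac{|Q_{i}|}{\|\chi_{Q_{i}}\|_{X}}\bigl(\tfrac{1}{|Q_{i}|}\int_{Q_{i}}|g-P_{Q_{i}}^{(d)}g|^{q'}\bigr)^{1/q'}$ (small-cube case), so simply taking $\mu_{i}=\lambda_{i}$ already produces the sum in Definition~\ref{def5.1}.
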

\begin{proof}[{\bf Proof of Theorem~{\upshape\ref{th5.2}}}]
First we prove (1). From Theorem~{\ref{th4.5}}, we can know that for any $f\in h_{fin}^{X,q,d}(\mathbb R^{n})$, $f$ can be decomposed into $f=\sum_{j=1}^{M_1}\lambda_{j}a_{j}+\sum_{j=1}^{M_2}\mu_{j}b_{j}$, where every $a_{j}$ is supported in $Q_{j}$ with $\vert Q_{j}\vert<1$ and every $b_{j}$ is supported in $P_{j}$ with $\vert P_{j}\vert\geq1$.
For the sum $\sum_{j=1}^{M_1}\lambda_{j}a_{j}$ and $g\in\widetilde{bmo}^{X,q^{\prime},d}(\mathbb R^{n})$,
\[
\begin{aligned}
    \left\vert L_{g}\left(\sum_{j=1}^{M_1}\lambda_{j}a_{j}\right)\right\vert
    &=\left\vert \int_{\mathbb R^{n}}\sum_{j=1}^{M_1}\lambda_{j}a_{j}(x)g(x)dx \right\vert\\
    &\leq\sum_{j=1}^{M}\lambda_{j}\left\vert\int_{\mathbb R^{n}}a_{j}(x)[g(x)-P_{Q_{j}}^{(d)}g(x)]dx\right\vert\\
    &\leq\sum_{j=1}^{M_{1}}\lambda_{j}\|a_{j}\|_{L^{q}}\|g-P_{Q_{j}}^{(d)}g\|_{L^{q^{\prime}}}\\
    &\leq\sum_{j=1}^{M_{1}}\frac{\lambda_{j}\vert Q_{j}\vert}{\|\chi_{Q_{j}}\|_{X}}\left( \frac{1}{\vert Q_{j}\vert^{q^{\prime}}}\int_{\mathbb R^{n}}\vert g(x)-P_{Q_{j}}^{(d)}g(x)\vert^{q^{\prime}}dx \right)^{\frac{1}{q^{\prime}}}\\
    &\leq\left\| \sum_{j=1}^{M_{1}}\frac{\lambda_{j}\chi_{Q_{j}}}{\|\chi_{Q_{j}}\|_{X}} \right\|_{X}\|g\|_{\widetilde{bmo}^{X,q^{\prime},d}}\\
    &\leq C\|f\|_{h_{fin}^{X,q,d}}\|g\|_{\widetilde{bmo}^{X,q^{\prime},d}}\\
    &\sim\|f\|_{h_{X}}\|g\|_{\widetilde{bmo}^{X,q^{\prime},d}}.
\end{aligned}
\]
For the sum $\sum_{j=1}^{M_2}\mu_{j}b_{j}$,
\[
\begin{aligned}
    \left\vert L_{g}\left(\sum_{j=1}^{M_2}\mu_{j}b_{j}\right)\right\vert
    &=\left\vert \int_{\mathbb R^{n}}\sum_{j=1}^{M_2}\mu_{j}b_{j}(x)g(x)dx \right\vert\\
    &\leq\sum_{j=1}^{M_{2}}\mu_{j}\left\vert \int_{\mathbb R^{n}}b_{j}(x)g(x)dx \right\vert\\
    &\leq\sum_{j=1}^{M_{2}}\mu_{j}\|b_{j}\|_{L^{q}}\|g\|_{L^{q^{\prime}}}\\
    &\leq\sum_{j=1}^{M_{2}}\frac{\mu_{j}\vert P_{j}\vert}{\|\chi_{P_{j}}\|_{X}}\left(  \frac{1}{\vert P_{j}\vert^{q^{\prime}}}\int_{\mathbb R^{n}}\vert g(x)\vert^{q^{\prime}}dx \right)^{\frac{1}{q^{\prime}}}\\
    &\leq\left\| \sum_{j=1}^{M_{2}}\frac{\mu_{j}\chi_{P_{j}}}{\|\chi_{P_{j}}\|_{X}} \right\|_{X}\|g\|_{\widetilde{bmo}^{X,q^{\prime},d}}\\
    &\leq C\|f\|_{h_{fin}^{X,q,d}}\|g\|_{\widetilde{bmo}^{X,q^{\prime},d}}\\
    &\sim\|f\|_{h_{X}}\|g\|_{\widetilde{bmo}^{X,q^{\prime},d}}.    
\end{aligned}
\]
Therefore, by the density, we conclude that (1) holds true. It remains to prove (2). Fix a cube $P$ with $l(P)\geq1$. For any given $f\in L^{q}(P)$ with $\|f\|_{L^{q}(P)}>0$, set
\[
a(x)\equiv\frac{\vert P\vert^{\frac{1}{q}}f(x)\chi_{P}(x)}{\|f\|_{L^{q}(P)}\|\chi_{P}\|_{X}}.
\]
Obviously, $a$ is a local-$(X,q,d)$-atom. For any $L\in(h_{X}(\mathbb R^{n}))^{*}$,
\[
\vert L(a)\vert\leq\|L\|\|a\|_{h_{X}}\leq C\|L\|.
\]
Thus, we can obtain that
\[
\vert L(f)\vert\leq\|L\|\|f\|_{h_{X}}\leq C\|L\|\|f\|_{L^{q}(P)}\vert P\vert^{-\frac{1}{q}}\|\chi_{P}\|_{X},
\]
which implies that $L\in(L^{q}(P))^{*}$ and $(h_{X}(\mathbb R^{n}))^{*}\subset(L^{q}(P))^{*}$. Since $1<q<\infty$, by using the duality $L^{q}(P)$--$L^{q^{\prime}}(P)$, we find that there exists a $g^{P}\in L^{q^{\prime}}(P)$ such that for all $f\in L^{q}(P)$,
\[
L(f)=\int_{P}f(x)g^{P}(x)dx,
\]
and $\|g^{P}\|_{L^{q^{\prime}}(P)}\leq C\|L\|\vert P\vert^{-\frac{1}{q}}\|\chi_{P}\|_{X}$. Take a sequence $\{P_{j}\}_{j\in\mathbb N}$ of cubes such that $P_{j}\subset P_{j+1}$, $\bigcup_{j\in\mathbb N}P_{j}=\mathbb R^{n}$ and $l(P_{1})\geq1$. Thus, by repeating the similar arguments, we can know that there exists a $g^{P_{j}}\in L^{q^{\prime}}(P_{j})$ such that for each $P_{j}$ and any $f\in L^{q}(P_{j})$,
\[
L(f)=\int_{P_{j}}f(x)g^{P_{j}}(x)dx,
\]
and $\|g^{P_{j}}\|_{L^{q^{\prime}}(P_{j})}\leq C\|L\|\vert P_{j}\vert^{-\frac{1}{q}}\|\chi_{P_{j}}\|_{X}$. Then we can construct a function $g$ such that for all $f\in L^{q}(P_{j})$ and all $j\in\mathbb N$,
\[
L(f)=\int_{P_{j}}f(x)g(x)dx.
\]
For more details, see \cite{tan2023real}. Thus, we conclude that
\[
L(f)=\langle g,f \rangle=\int_{\mathbb R^{n}}f(x)g(x)dx
\]
holds true for all $f\in h_{fin}^{X,q,d}(\mathbb R^{n})$.\par
When $\vert Q_{j}\vert<1$, let $f_{j}\in L^{q}(Q_{j})$ with $\|f_{j}\|_{L^{q}(Q_{j})}=1$ satisfying
\[
\left[  \int_{Q_{j}}\vert g(x)-P_{Q_{j}}^{(d)}g(x)\vert^{q^{\prime}}dx \right]^{\frac{1}{q^{\prime}}}=\int_{Q_{j}}[g(x)-P_{Q_{j}}^{(d)}g(x)]f_{j}(x)dx
\]
and, for any $x\in\mathbb R^{n}$, define
\[
a_{j}(x)\equiv\frac{\vert Q_{j}\vert^{\frac{1}{q}}(f_{j}(x)-P_{Q_{j}}^{(d)}f_{j}(x))\chi_{Q_{j}}}{\|f_{j}-P_{Q_{j}}^{(d)}f_{j}(x)\|_{L^{q}(Q_{j})}\|\chi_{Q_{j}}\|_{X}}.
\]
Obviously, $a_{j}$ is a local-$(X,q,d)$-atom. Then, if $L\in(h_{X}(\mathbb R^{n}))^{*}$, we can get that
\[
L\left(\sum_{j=1}^{M}\lambda_{j}a_{j}\right)\leq\|L\|\left\| \sum_{j=1}^{M}\lambda_{j}a_{j} \right\|_{h_{X}}\leq C\|L\|\left\|  \sum_{j=1}^{M}\frac{\lambda_{j}\chi_{Q_{j}}}{\|\chi_{Q_{j}}\|_{X}} \right\|_{X}.
\]
Moreover, we have
\[
\begin{aligned}
    &\sum_{j=1}^{M}\frac{\lambda_{j}\vert Q_{j}\vert}{\|\chi_{Q_{j}}\|_{X}}\left( \frac{1}{\vert Q_{j}\vert}\int_{Q_{j}}\vert g(x)-P_{Q_{j}}^{(d)}g(x)\vert^{q^{\prime}}dx \right)^{\frac{1}{q^{\prime}}}\\
    &=\sum_{j=1}^{M}\frac{\lambda_{j}}{\|\chi_{Q_{j}}\|_{X}}\vert Q_{j}\vert^{\frac{1}{q}}\int_{Q_{j}} [g(x)-P_{Q_{j}}^{(d)}g(x)]f_{j}(x)dx \\
    &=\sum_{j=1}^{M}\frac{\lambda_{j}\vert Q_{j}\vert^{\frac{1}{q}}}{\|\chi_{Q_{j}}\|_{X}}\int_{Q_{j}} [f_{j}(x)-P_{Q_{j}}^{(d)}f_{j}(x)]g(x)\chi_{Q_{j}}dx\\
    &=\sum_{j=1}^{M}\frac{\lambda_{j}\vert Q_{j}\vert^{\frac{1}{q}}}{\|\chi_{Q_{j}}\|_{X}}\|f_{j}-P_{Q_{j}}^{(d)}f_{j}(x)\|_{L^{q}(Q_{j})}\|\chi_{Q_{j}}\|_{X}\vert Q_{j}\vert^{-\frac{1}{q}}\int_{Q_{j}}a_{j}(x)g(x)dx.
\end{aligned}
\]
Then, by using the fact that $\|P_{Q_{j}}^{(d)}(f_{j})\|_{L^{q}}\leq C\|f_{j}\|_{L^{q}}$, we conclude that
\[
\begin{aligned}
    &\sum_{j=1}^{M}\frac{\lambda_{j}\vert Q_{j}\vert}{\|\chi_{Q_{j}}\|_{X}}\left( \frac{1}{\vert Q_{j}\vert}\int_{Q_{j}}\vert g(x)-P_{Q_{j}}^{(d)}g(x)\vert^{q^{\prime}}dx \right)^{\frac{1}{q^{\prime}}}\\
    &\leq C \sum_{j=1}^{M}\lambda_{j}\int_{Q}a_{j}(x)g(x)dx\\
    &\sim\sum_{j=1}^{M_{1}}\lambda_{j}L(a_{j})\sim L\left( 
    \sum_{j=1}^{M}\lambda_{j}a_{j} \right)\\
    &\leq C\|L\|\left\| \sum_{j=1}^{M}\frac{\lambda_{j}\chi_{Q_{j}}}{\|\chi_{Q_{j}}\|_{X}} \right\|_{X}.
\end{aligned}
\]\par
When $\vert Q_{j}\vert\geq1$, let $f_{j}\in L^{q}(Q_{j})$ with $\|f_{j}\|_{L^{q}(Q_{j})}=1$ satisfying
\[
\left(\int_{Q_{j}}\vert g(x)\vert^{q^{\prime}}dx\right)^{\frac{1}{q^{\prime}}}=\int_{Q_{j}}f_{j}(x)g(x)dx
\]
and, for any $x\in\mathbb R^{n}$, define
\[
b_{j}(x)=\frac{\vert Q_{j}\vert^{\frac{1}{q}}f_{j}(x)\chi_{Q_{j}}}{\|f_{j}\|_{L^{q}(Q_{j})}\|\chi_{Q_{j}}\|_{X}}.
\]
Obviously, $a_{j}$ is a local-$(X,q,d)$-atom. Then, we can obtain that
\[
\begin{aligned}
    &\sum_{j=1}^{M_{2}}\frac{\mu_{j}\vert Q_{j}\vert}{\|\chi_{Q_{j}}\|_{X}}\left( \frac{1}{\vert Q_{j}\vert}\int_{Q_{j}}\vert g(x)\vert^{q^{\prime}}dx \right)^{\frac{1}{q^{\prime}}}\\
    &\leq\sum_{j=1}^{M_{2}}\frac{\mu_{j}\vert Q_{j}\vert^{\frac{1}{q}}}{\|\chi_{Q_{j}}\|_{X}}\vert Q_{j}\vert^{-\frac{1}{q}}\|f_{j}\|_{L^{q}(Q_{j})}\|\chi_{Q_{j}}\|_{X}\int_{Q_{j}}b_{j}(x)g(x)dx\\
    &=\sum_{j=1}^{M_{2}}\mu_{j}\int_{Q_{j}}b_{j}(x)g(x)dx\\
    &\sim\sum_{j=1}^{M_{2}}\mu_{j}L(b_{j})\sim L(\sum_{j=1}^{M_{2}}\mu_{j}b_{j})\\
    &\leq C\|L\|\left\|  \sum_{j=1}^{M_{2}}\frac{\mu_{j}\chi_{Q_{j}}}{\|\chi_{Q_{j}}\|_{X}} \right\|_{X}.
\end{aligned}
\]
Therefore, we conclude that (2) holds true.
\end{proof}

\section{Applications}\label{se6}
In this section, we apply Theorems~\ref{th4.1},\ \ref{th4.2},\ \ref{th4.5},\ \ref{th5.2} and Corollary~\ref{cor4.3} to some concrete examples of ball quasi-Banach function spaces, which implies further applications of the main results.
\subsection{Lebesgue spaces and weighted Lebesgue spaces}
First, we apply the obtained results to some fundamental spaces, namely, the Lebesgue space and the weighted Lebesgue space.\par
Let $X:=L^{p}(\mathbb R^{n})$ with $p\in(0,\infty)$. By \cite[Theorem 1]{fefferman1971some}, we know that Assumption~\ref{ass2.7} holds true when $s\in(0,1]$ and $\theta\in(0,\min\{s,p\})$. Besides, the hypothesis of Lemma~\ref{le2.8} holds true when $p_{0}\in(0,p)$ and $q_{0}\in(\max\{1,p\},\infty]$. Moreover, $L^{p}(\mathbb R^{n})$ has an absolutely continuous quasi-norm. Therefore, Theorem~\ref{th4.1},\ \ref{th4.2},\ \ref{th4.5},\ \ref{th5.2} and Corollary~\ref{cor4.3} hold true with X replaced by $L^{p}(\mathbb R^{n})$.\par
In fact, in \cite[Subsection 7.1]{sawano2017hardy} the authors have shown that $L^{p}_{\omega}(\mathbb R^{n})$ with $p\in(0,\infty)$ and $\omega\in A_{\infty}(\mathbb R^{n})$ is a ball quasi-Banach function space. Moreover, by \cite[Theorem 3.1(b)]{andersen1981weighted}, Assumption~\ref{ass2.7} holds true when $\theta,\ s\in(0,1]$, $\theta<s$ and $X:=L^{p}_{\omega}(\mathbb R^{n})$ with $p\in(\theta,\infty)$ and $\omega\in A_{p/\theta}(\mathbb R^{n})$. Besides, let $X:=L^{p}_{\omega}(\mathbb R^{n})$ with $p\in(0,\infty)$ and $\omega\in A_{\infty}(\mathbb R^{n})$. From \cite[Theorem 7.3]{2001fourier}, the hypothesis of Lemma~\ref{le2.8} holds true for $p_{0}\in(0,p)$, $\omega\in A_{p/p_{0}}$ and $q_{0}\in(\max\{1,p\},\infty]$ large enough such that $\omega^{1-(p/p_{0})^{\prime}}\in A_{(p/p_{0})^{\prime}/(q_{0}/p_{0})^{\prime}}$. Moreover. $L^{p}_{\omega}(\mathbb R^{n})$ has an absolutely continuous quasi-norm. Therefore, Theorems~\ref{th4.1},\ \ref{th4.2},\ \ref{th4.5},\ \ref{th5.2} and Corollary~\ref{cor4.3} hold true with X replaced by $L^{p}_{\omega}(\mathbb R^{n})$.

\subsection{Variable Lebesgue space}

Let $\mathcal{P}_{0}$ be the collection of all measurable functions $p(\cdot)\colon\mathbb R^{n}\to(0,\infty)$. For any $p(\cdot)\in\mathcal{P}_{0}$, let 
    \[
    {p}_{+}:=\operatorname*{ess\,sup}\limits_{x\in\mathbb R^{n}}p(x)\quad {\rm and} \quad {p}_{-}:=\operatorname*{ess\,inf}_{x\in\mathbb R^{n}}p(x).
    \]
    A function $p(\cdot)\in\mathcal{P}_{0}$ is said to be \emph{globally log-H\"older continuous}, denoted by $p(\cdot)\in LH$ if there exists a positive constant $p_{\infty}$ such that, for any $x,\ y\in\mathbb R^{n}$,
    \[
    \vert p(x)-p(y) \vert\lesssim\frac{1}{-\log(\vert x-y \vert)},\quad \vert x-y \vert<\frac{1}{2}
    \]
    and
    \[
    \vert p(x)-p_{\infty} \vert\lesssim\frac{1}{\log(\vert x \vert+e)}.
    \]
\begin{definition}
    Let $p(\cdot)\colon\mathbb R^{n}\to[0,\infty)$ be a measurable function. The \emph{variable Lebesgue space} $L^{p(\cdot)}(\mathbb R^{n})$ is defined as the set of all measurable functions $f$ for which the quantity $\int_{\mathbb R^{n}}\vert\epsilon f(x)\vert^{p(x)}dx$ is finite for some $\epsilon>0$ and 
    \[
    \|f\|_{L^{p(\cdot)}}:=\inf\left\{  \lambda>0\colon\int_{\mathbb R^{n}}\left( \frac{\vert f(x)\vert}{\lambda} \right)^{p(x)}dx\leq1 \right\}.
    \]
\end{definition}
    In fact, from \cite[Subsection 7.4]{sawano2017hardy} we know that whenever $p(\cdot)\in\mathcal{P}_{0}$, $L^{p(\cdot)}(\mathbb R^{n})$ is a ball quasi-Banach function space. Let $X:=L^{p(\cdot)}(\mathbb R^{n})$ with $p(\cdot)\in LH$. By \cite{cruz2014variable,cruz2020}, we know that Assumption~\ref{ass2.7} holds true when $s\in(0,1]$ and $\theta\in(0,\min\{s,p_{-}\})$. Besides, let $X:=L^{p(\cdot)}(\mathbb R^{n})$ with $p(\cdot)\in LH$ and $0<p_{-}\leq p_{+}<\infty$. From \cite[Theorem 3.16]{cruz2013variable}, the hypothesis of Lemma~\ref{le2.8} holds true when $p_{0}\in(0,p_{-})$ and $q_{0}\in(\max\{1,p_{+}\},\infty]$. Moreover. $L^{p(\cdot)}(\mathbb R^{n})$ has an absolutely continuous quasi-norm. Therefore, Theorems~\ref{th4.1},\ \ref{th4.2},\ \ref{th4.5},\ \ref{th5.2} and Corollary~\ref{cor4.3} hold true with X replaced by $L^{p(\cdot)}(\mathbb R^{n})$. We remark that these results have been obtained in \cite{tan2023real}.

\subsection{Lorentz spaces}
The Lorentz space was first introduced by Lorentz in \cite{Lorentz1951OnTT}. We refer the reader to \cite{grafakos2014classical,sawano2017hardy,Sawyer1990BoundednessOC} for more studies on Lorentz spaces.
\begin{definition}
    The \emph{Lorentz space} $L^{p,q}(\mathbb R^{n})$ is defined to be the set of all measurable functions $f$ on $\mathbb R^{n}$ such that, when $p,q\in(0,\infty)$,
    \[
    \|f\|_{L^{p,q}}:=\left\{  \int_{0}^{\infty}\left[ t^{\frac{1}{p}}f^{*}(t) \right]^{q}\frac{dt}{t} \right\}^{\frac{1}{q}}<\infty,
    \]
    and, when $p\in(0,\infty)$ and $q=\infty$,
    \[
    \|f\|_{L^{p,q}}:=\sup_{t\in(0,\infty)}t^{\frac{1}{p}}f^{*}(t)<\infty,
    \]
    where $f^{*}$ denotes the decreasing rearrangement of $f$, which is defined by setting, for any $t\in[0,\infty)$,
    \[
    f^{*}(t):=\inf\{s\in(0,\infty)\colon \mu_{f}(s)\leq t\}
    \]
    with $\mu_{f}(s):=\left\vert  \left\{  x\in\mathbb R^{n}\colon\vert f(x)\vert>s \right\} \right\vert$.
\end{definition}
From \cite{sawano2017hardy}, we can know that when $p,q\in(1,\infty)$ or $p\in(1,\infty)$ and $q=\infty$, $L^{p,q}(\mathbb R^{n})$ is a ball Banach function space; when $p,q\in(0,\infty)$ or $p\in(0,\infty)$ and $q=\infty$, $L^{p,q}(\mathbb R^{n})$ is a ball quasi-Banach function space. Moreover, from \cite{carro2007}, we can know that $L^{p,q}(\mathbb R^{n})$ has an absolutely continuous norm. 
Let $X:=L^{p,q}(\mathbb R^{n})$ with $p\in(0,\infty)$ and $q\in(0,\infty]$. From \cite[Theorem 2.3(iii)]{curbera2006}, Assumption~\ref{ass2.7} holds true when  $s\in(0,1]$ and $\theta\in(0,\min\{s,p\})$. Besides, let $X:=L^{p,q}(\mathbb R^{n})$ with $p\in(0,\infty)$ and $q\in(0,\infty)$. By \cite[Theorem 1.4.16]{grafakos2014classical}, the hypothesis of Lemma~\ref{le2.8} holds true when $p_{0}\in(0,\min\{p,q\})$ and $q_{0}\in(\max\{1,p,q\},\infty]$. Therefore, Theorems~\ref{th4.1},\ \ref{th4.2},\ \ref{th4.5},\ \ref{th5.2} and Corollary~\ref{cor4.3} hold true with X replaced by $L^{p,q}(\mathbb R^{n})$.

\subsection{Mixed-norm Lebesgue space}
As a natural generalization of the Lebesgue space $L^{p}(\mathbb R^{n})$, the study of the mixed-norm Lebesgue space $L^{\Vec{p}}(\mathbb R^{n})$ can be traced back to H\"ormander \cite{hormander1960} and was further developed by Benedek and Panzone \cite{10.1215/S0012-7094-61-02828-9}. For more information on mixed-norm type spaces, see \cite{clean2017,huang2019,huang2021}.\par
\begin{definition}
    Let $\Vec{p}:=(p_{1}, \cdots,p_{n})\in(0,\infty]^{n}$. The \emph{mixed-norm Lebesgue space} $L^{\Vec{p}}(\mathbb R^{n})$ is defined to be the set of all the measurable functions $f$ on $\mathbb R^{n}$ such that
    \[
    \|f\|_{L^{\Vec{p}}}:=\left\{  \int_{\mathbb R^{n}}\cdots\left[ \int_{\mathbb R^{n}}\vert f(x_{1},\cdots,x_{n})\vert^{p_{1}}dx_{1}  \right]^{\frac{p_{2}}{p_{1}}}\cdots dx_{n}\right\}^{\frac{1}{p_{n}}}<\infty
    \]
    with the usual modifications made when $p_{i}=\infty$ for some $i\in\{1,\cdots,n\}$. 
\end{definition}
    For any exponent vector $\Vec{p}:=(p_{1}, \cdots,p_{n})$, let
    \[
    p_{-}:=\min\{p_{1},\cdots,p_{n}\}\quad{\rm and}\quad p_{+}:=\max\{p_{1},\cdots,p_{n}\}.
    \]\par
In fact, from the definition of the mixed-norm Lebesgue space, we know that $L^{\Vec{p}}(\mathbb R^{n})$ with $\Vec{p}\in(0,\infty)^{n}$ is a ball quasi-Banach function space. By \cite{huang2019}, Assumption~\ref{ass2.7} holds true when $s\in(0,1]$, $\theta\in(0,\min\{s,p_{-}\})$ and $X:=L^{\Vec{p}}(\mathbb R^{n})$ with $\Vec{p}\in(0,\infty)^{n}$. Besides, by \cite{huang2019}, the hypothesis of Lemma~\ref{le2.8} holds true when $X:=L^{\Vec{p}}(\mathbb R^{n})$ with $\Vec{p}\in(0,\infty)^{n}$, $p_{0}\in(0,p_{-})$ and $q_{0}\in(\max\{1,p_{+}\},\infty]$. Furthermore, $L^{\Vec{p}}(\mathbb R^{n})$ has an absolutely continuous quasi-norm. Therefore, Theorems~\ref{th4.1},\ \ref{th4.2},\ \ref{th4.5},\ \ref{th5.2} and Corollary~\ref{cor4.3} hold true with X replaced by $L^{\Vec{p}}(\mathbb R^{n})$.

\subsection{Orlicz-slice space}
A function $\Phi\colon[0,\infty)\to[0,\infty)$ is called an \emph{Orlicz function} if it is non-decreasing and satisfies $\Phi(0)=0$, $\Phi(t)>0$ whenever $t\in(0,\infty)$, and $\lim_{t\to\infty}\Phi(t)=\infty$. An Orlicz function $\Phi$ is said to be of \emph{lower}(resp., \emph{upper}) \emph{type} $p$ with $p\in(-\infty,\infty)$ if there exists a positive constant $C_{(p)}$, depending on $p$, such that, for any $t\in[0,\infty)$ and $s\in(0,1)$ (resp., $s\in[1,\infty)$),
    \[
    \Phi(st)\leq C_{(p)}s^{p}\Phi(t).
    \]
    An Orlicz function $\Phi\colon[0,\infty)\to[0,\infty)$ is said to be of \emph{positive lower} (resp., \emph{upper}) type if it is of lower (resp., upper) type $p$ for some $p\in(0,\infty)$.\par
    Let $\Phi$ be an Orlicz function with positive lower type $p_{\Phi}^{-}$ and positive upper type $p_{\Phi}^{+}$. The \emph{Orlicz space} $L^{\Phi}(\mathbb R^{n})$ is defined to be the set of all measurable functions $f$ on $\mathbb R^{n}$ such that
    \[
    \|f\|_{L^{\Phi}}:=\inf\left\{  \lambda\in(0,\infty)\colon\int_{\mathbb R^{n}}\Phi\left( \frac{\vert f(x)\vert}{\lambda} \right)dx\leq1 \right\}<\infty.
    \]
\begin{definition}
    Let $t,\ r\in(0,\infty)$ and $\Phi$ be an Orlicz function with lower type $p^{-}_{\Phi}\in(0,\infty)$ and upper type $p^{+}_{\Phi}\in(0,\infty)$. Then \emph{Orlicz-slice space} $(E^{r}_{\Phi})_{t}(\mathbb R^{n})$ is defined to be the set of all measurable functions $f$ such that
    \[
    \|f\|_{(E^{r}_{\Phi})_{t}}:=\left\{  \int_{\mathbb R^{n}}\left[ \frac{\|f\chi_{B(x,t)}\|_{L^{\Phi}}}{\|\chi_{B(x,t)}\|_{L^{\Phi}}}  \right]^{r} dx\right\}^{1/r}<\infty.
    \]
\end{definition}
    In fact, from \cite[Lemma 2.28]{zhang2019real} we know that $(E^{r}_{\Phi})_{t}(\mathbb R^{n})$ is a ball quasi-Banach function space. Similarly to the proof of \cite[Lemma 4.5]{zhang2019real}, we know that $(E^{r}_{\Phi})_{t}(\mathbb R^{n})$ has an absolutely continuous quasi-norm. 
    Let $X:=(E^{r}_{\Phi})_{t}(\mathbb R^{n})$ with $t,r\in(0,\infty)$ and $p^{-}_{\Phi},p^{+}_{\Phi}\in(0,\infty)$. From \cite[Lemma 4.3]{zhang2019real}, Assumption~\ref{ass2.7} holds true when $s\in(0,1]$ and $\theta\in(0,\min\{s,p_{\Phi}^{-},r\})$. Besides, 
    from \cite[Lemma 4.4]{zhang2019real}, the hypothesis of Lemma~\ref{le2.8} holds true when $q_{0}\in(\max\{1,r,p_{\Phi}^{+}\},\infty]$ and $p_{0}\in(0,\min\{p_{\Phi}^{-},r\})$. Therefore, Theorems~\ref{th4.1},\ \ref{th4.2},\ \ref{th4.5},\ \ref{th5.2} and Corollary~\ref{cor4.3} hold true with X replaced by $(E^{r}_{\Phi})_{t}(\mathbb R^{n})$.

\section*{Acknowledgments}
This project is supported by the National Natural Science Foundation of China (Grant No. 11901309), the China Postdoctoral Science Foundation (Grant No. 2023T160296) and the Jiangsu Government Scholarship for Study Abroad.


\bibliographystyle{amsplain}

\end{document}